\newtheorem{corollary}{Corollary}
\newtheorem{lemma}{Lemma}
\newtheorem{proposition}{Proposition}
\newtheorem{remark}{Remark}
\author{Carlo Pandiscia}
\title{An Ergodic Dilation of Completely Positive Maps}
\date{}
\begin{document}
\maketitle

\begin {abstract}
We shall prove the following Stinespring-type theorem: there exists a triple
$(\pi,\mathcal{H},\mathbf{V})$ associated with an unital completely positive map 
$\Phi:\mathfrak{A}\rightarrow \mathfrak{A}$ on
C*-algebra $\mathfrak{A}$ with unit, where $\mathcal{H}$ is a Hilbert space, 
$\pi:\mathfrak{A\rightarrow B}(\mathcal{H})$  
is a faithful representation and 
$\mathbf{V}$ 
is a linear isometry on 
$\mathcal{H}$ 
such that
$\pi(\Phi(a)=\mathbf{V}^*\pi(a)\mathbf{V}$ 
for all $a$ belong to $\mathfrak{A}$. The Nagy dilation theorem, applied to isometry 
$\mathbf{V}$,
allows to construct a dilation of ucp-map,
$\Phi$, in the sense of Arveson, 
that satisfies ergodic properties of a $\Phi $-invariante state $\varphi$ on 
$\mathfrak{A}$, if $\Phi$ admit a $\varphi $-adjoint.
\end {abstract}

\section{Introduction}
A discrete quantum process is a pair $(\mathfrak{M},\Phi)$ consisting of a von
Neumann algebra $\mathfrak{M}$ and a normal unital completely positive map
$\Phi$ on $\mathfrak{M}$. In this work we shall prove that any quantum process
is possible dilate to quantum process where the dynamic $\Phi$ is a
*-endomorphism of a larger von Neumann algebra.
\newline 
In dynamical systems,
the process of dilation has taken different meanings. Here we adopt the
following definition (See Ref. Muhly-Solel \cite{Muh}):
\newline 
Suppose $\mathfrak{M}$ acts on Hilbert space $\mathcal{H}$, a dilation of a quantum process $(\mathfrak{M},\Phi)$ is a quadruple $(\mathfrak{R},\Theta,\mathcal{K},z)$ where $(\mathfrak{R},\Theta)$ is a quantum process with $\mathfrak{R}$ acts on Hilbert space $\mathcal{K}$ and $\Theta$ is a homomorphism (i.e. *-endomorphism on von Neumann algebra $\mathfrak{R}$) with $z:\mathcal{H}\rightarrow\mathcal{K}$ isometric embedding such that:

\begin{itemize}
\item  $z\mathfrak{M}z^*\subset$ $\mathfrak{R}$ \ and $\ z^*\mathfrak{R}z\subset\mathfrak{M};$

\item  $\Phi^{n}(a)=z^*\Theta^{n}(zaz^*)z$ \ \ \ for all $a\in\mathfrak{M}$ and $n\in\mathbb{N};$

\item  $z^*\Theta^{n}(X)z=\Phi^{n}(z^*Xz) $ \ for all $X\in\mathfrak{R}$ and $n\in\mathbb{N}.$
\end{itemize}
Many authors in the past have been applied to problems very similar to the one we described above. We remember the work of Arveson \cite{arv} on the Eo-semigroups, of Baht-Parthasarathy on the dilations of nonconservative dynamical semigroups \cite{bath} and finally, the most recent work of Mhulay-Solel \cite{Muh}.
\newline
We shall prove the existence of dilation using the Nagy theorem for linear contraction (See Fojas-Nagy Ref.\cite{NF}) and of a particular covariat representation obtained through the Stinespring's theorem for completely positive maps (See Stinespring Ref.\cite{Stine}).
\newline
We recall that a covariant representation of discrete quantum process $(\mathfrak{M},\Phi)$ is a triple $(\pi,\mathcal{H},\mathbf{V})$ where $\pi:\mathfrak{M\rightarrow B}(\mathcal{H})$ is a normal faithful representation on the Hilbert space $\mathcal{H}$ and $\mathbf{V}$ is an isometry on $\mathcal{H}$ such that for $a\in \mathfrak{M}$ and $a\in\mathbb{N}$,
\[
\pi(\Phi^n(a))=\mathbf{V}^{n*}\pi(a)\mathbf{V}^n.
\]
Since the covariant representation is faithful and normal, we identify the von neuman algebra $\mathfrak{M}$ with $\pi(\mathfrak{M})$ and in sec. 3 we construct a dilation of the quantum process $(\pi(\mathfrak{M}),\Psi)$ where $\Psi$ is  the following completely positive map $\Psi(\pi(x))=\pi(\Phi(x))$ for all $n\in\mathfrak{M}$.
\newline
In fact, if the triple $(\widehat{\mathbf{V}},\widehat{\mathcal{H}},z)$ is the minimal unitary dilation of isometry $\mathbf{V}$, we can construct a von Neumann algebras  $\widehat{\mathfrak{M}}\subset\mathfrak{B}(\widehat{\mathcal{H}})$ with following properties: $\widehat{\mathbf{V}}^*\widehat{\mathfrak{M}}\widehat{\mathbf{V}}\subset\widehat{\mathfrak{M}}$
and $z^*\widehat{\mathfrak{M}}z=\mathfrak{M}$.
\newline
Of fundamental importance to quantum process theory, is the $\varphi$-adjointness properties. The dynamic $\Phi$ admit a $\varphi$-adjoint (See Kummerer Ref.\cite{kum}) relative to the normal $\Phi$-invariant state $\varphi$ on $\mathfrak{M}$, if there is a normal unital completely positive map $\Phi_{\natural}:\mathfrak{M\rightarrow M}$ 
such that for $a,b\in\mathfrak{M}$,
\[
\varphi(\Phi(a)b)=\varphi(a\Phi_{\natural}(b)).
\]
The relationship between reversible process, modular operator and $\varphi$-adjointness has been studied by Accardi-Cecchini in \cite{accardi} and Majewski in \cite{Maj}.
\newline
In sec. 4 we shall prove that our dilation satisfies ergodic properties of a $\Phi $-invariante state $\varphi$ on $\mathfrak{M}$ if the dynamic $\Phi$ admit a $\varphi$-adjoint.
\newline
More precisely, let $(\mathfrak{R},\Theta)$ be our dilation of quantum process $(\mathfrak{M},\Phi)$, we shall prove that if  
\[
\underset{n\rightarrow\infty}{\lim}\dfrac{1}{n+1}
\sum \limits_{k=0}^{n}|\varphi(a\Phi^{k}(b))-\varphi(a)\varphi(b)|=0, 
\]
for all $a,b\in\mathfrak{M}$, we have 
\[
\underset{n\rightarrow\infty}{\lim}\dfrac{1}{n+1}
\sum \limits_{k=0}^{n}
|\varphi(z^*X\Theta^k(Y)z)-\varphi(z^*Xz)
\varphi(z^*Yz)|=0,
\]
for all $X,Y\in\mathfrak{R}$.
\newline
For generality, we will work with concrete unital C*-algebras $\mathfrak{A}$ and unital completely positive map $\Phi$ (briefly ucp-map). The results obtained are easily extended to the quantum process $(\mathfrak{M},\Phi)$.  
\newline
Before introducing the proof about existence of dilation of discrete quantum process, it is necessary to recall the fundamental Nagy dilation theorem, subject of the next section.

\section{Nagy dilation theorem}
If $\mathbf{V}$ is a linear isometry on Hilbert space $\mathcal{H}$, 
there is a triple $(\widehat{\mathbf{V}},\widehat{\mathcal{H}},\mathbf{Z})$ 
where 
$\widehat{\mathcal{H}}$ 
is a Hilbert space, 
$\mathbf{Z}:\mathcal{H}\mathbf{\rightarrow }\widehat{\mathcal{H}}$
is a lineary isometry, while 
$\widehat{\mathbf{V}}$ 
is an unitary operator on 
$\widehat{\mathcal{H}}$ 
such that for $n\in \mathbb{N}$,
\begin{equation}
\widehat{\mathbf{V}}^n\mathbf{Z}=\mathbf{Z}\mathbf{V}^n, \label{dilatcontraz}
\end{equation}
with the following minimal properties:
\begin{equation}
\widehat{\mathcal{H}}=\bigvee_{k\in \mathbb{Z}}\widehat{\mathbf{V}}
^k\mathbf{Z}\mathcal{H}.\label{dilatcontrazminimal}
\end{equation}
For our purposes it is useful to recall here the structure of the unitary
minimal dilation of a contraction (See Fojas-Nagy Ref.\cite{NF}).
\newline
Let $\mathcal{K}$ be a Hilbert space, by 
$l^{2}(\mathcal{K})$
we denote the Hilbert space 
$\{\xi:\mathbb{N}\rightarrow \mathcal{K}:\sum\limits_{n\geq 0}\left\Vert \xi(n) \right\Vert ^2<\infty\}.$
\newline
We now get the orthogonal projection 
$\mathbf{F}=\mathbf{I}-\mathbf{VV}^*$ 
and the following Hilbert space 
$\widehat{\mathcal{H}}=\mathcal{H}\oplus l^{2}(\mathbf{F}\mathcal{H})$ 
and define the following unitary operator on the Hilbert space $\widehat{\mathcal{H}}$:
\begin{equation*}
\widehat{\mathbf{V}}=\left\vert 
\begin{array}{cc}
\mathbf{V} & \mathbf{F}\Pi _{0} \\ 
\mathbf{0} & \mathbf{W}
\end{array}
\right\vert,
\end{equation*}
where for each $j\in \mathbb{N}$ 
we have set with 
$\Pi_{j}:l^{2}(\mathbf{F}\mathcal{H})\rightarrow \mathcal{H}$ 
the canonical projections:
\begin{equation*}
\Pi_{j}(\xi _{0},\xi _{1}...\xi _{n}...) =\xi_{j},
\end{equation*}
while 
$\mathbf{W}:l^{2}(\mathbf{F}\mathcal{H})\rightarrow l^{2}(\mathbf{F}\mathcal{H})$ 
is the linear operator
\begin{equation*}
\mathbf{W}( \xi _{0},\xi _{1}...\xi _{n}...) =(\xi_{1},\xi
_{2}...),
\end{equation*}
for all 
$(\xi _{0},\xi _{1}...\xi _{n}...)\in l^{2}(\mathbf{F}\mathcal{H})$.
\newline 
If 
$Z:\mathcal{H}\rightarrow\widehat{\mathcal{H}}$ 
is the isometry defined by 
$\mathbf{Z}\Psi=\Psi\oplus0$ 
for all
$\Psi\in\mathcal{H}$, 
it's simple to prove that the relationships \ref{dilatcontraz} and \ref{dilatcontrazminimal} are given.
\newline 
We observe that for each 
$n\in\mathbb{N}$ 
we have
\begin{equation}
\widehat{\mathbf{V}}^n=\left\vert
\begin{array}
[c]{cc}
\mathbf{V}^n & C(n)  \\
\mathbf{0} & \mathbf{W}^n
\end{array}
\right\vert,
\end{equation}
where 
$C(n):l^{2}(\mathbf{F}\mathcal{H})\rightarrow\mathcal{H}$ 
are the following operators:
\begin{equation}
C(n)={\textstyle\sum\limits_{j=1}^{n}}
\mathbf{V}^{n-j}\mathbf{F}\Pi_{j-1},\ \ \ \ n\geq1.
\end{equation}
Furthermore, for each 
$n,m\in\mathbb{N}$ 
we obtain:
\begin{equation} 
\Pi_{n}\mathbf{W}^{m}=\Pi_{n+m}\text{\ \ \ and\ \ \ }\Pi_{n}\mathbf{W}^{m^*}=\left\{
\begin{array}
[c]{cc}
\Pi_{n-m} & n\geq m\\
0 & n<m
\end{array}
\right. , \label{rel-1}
\end{equation}
since
\[
\mathbf{W}^{m\ast}(\xi_{0},\xi_{1}...\xi_{n}..)=(0,0....0,\overset{m+1}{\overbrace{\xi_{0}}},\xi_{1}...)  ,
\]
while for each $k$ and $p$ natural number, we obtain:
\begin{equation}
\Pi _{p}\mathbf{C}(k) ^*=\left\{ 
\begin{array}{cc}
\mathbf{FV}^{( k-p-1) ^*} & k>p \\ 
\mathbf{0} & \text{elsewhere}
\end{array}
\right.  \label{rel-2}
\end{equation}
since
\[
C(k)^*\Psi=(\overset{k-time}{\overbrace
{\mathbf{FV}^{(k-1)^*}\Psi......\mathbf{FV}^{^*}
\Psi,\mathbf{F}\Psi}},0,.0..)  .
\]
for all 
$\Psi\in\mathcal{H}$.

\section{Invariant algebra}

Let be $\mathfrak{A\subset B}(\mathcal{H})$ a C*-algebras with
unit and $\mathbf{V}$ an isometry on Hilbert space $\mathcal{H}$ such that 
\begin{equation*}
\mathbf{V}^*\mathfrak{A}\mathbf{V}\subset\mathfrak{A}.
\end{equation*}
If $(\widehat{\mathbf{V}},\widehat{\mathcal{H}},\mathbf{Z})$ denotes the minimal unitary dilation of the isometry $\mathbf{V}$ we shall prove the following proposition:

\begin{proposition} 
There exists a C*-algebra with unit 
$\widehat {\mathfrak{A}}\subset\mathfrak{B}(\widehat{\mathcal{H}}) $ such that:
\newline
1 - $\mathbf{Z}\mathfrak{A}\mathbf{Z}^*\subset\widehat{\mathfrak{A}}$ \ \ and \ \ $\mathbf{Z}^*\widehat{\mathfrak{A}}\mathbf{Z}\subset\mathfrak{A,}$
\newline
2 - $\widehat{\mathbf{V}}^*\widehat{\mathfrak{A}}\widehat{\mathbf{V}}
\subset\widehat{\mathfrak{A}}$,
\newline
3 - $\mathbf{Z}^*\widehat{\mathbf{V}}^*X\widehat{\mathbf{V}}\mathbf{Z=\mathbf{V}^*Z}^*X
\mathbf{ZV,}$ \ \ \ \  for all $X\in\widehat {\mathfrak{A}}$,
\newline
4 - $\mathbf{Z}^*\widehat{\mathbf{V}}^*(\mathbf{Z}A\mathbf{Z}^*)\widehat{\mathbf{V}}=\mathbf{V}^*A
\mathbf{V}$, \ \ \ \ \ for all $A\in\ \mathfrak{A}$.\label{prop-invariantalgebra}
\end{proposition}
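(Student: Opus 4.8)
The natural candidate for $\widehat{\mathfrak{A}}$ is the C*-algebra generated inside $\mathfrak{B}(\widehat{\mathcal H})$ by the operators $\widehat{\mathbf V}^{k}\,\mathbf Z a\mathbf Z^*\,\widehat{\mathbf V}^{*m}$ with $a\in\mathfrak A$ and $k,m\in\mathbb N$; equivalently, using the block form $\widehat{\mathcal H}=\mathcal H\oplus l^2(\mathbf F\mathcal H)$, one writes out these generators in $2\times 2$ block-matrix form using the formulas \eqref{rel-1} and \eqref{rel-2} for $C(n)$, $\mathbf W^m$, $\Pi_j$ already recorded in Section~2, and then lets $\widehat{\mathfrak A}$ be the norm closure of the $*$-algebra they span. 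The first thing I would do is compute $\mathbf Z^* \widehat{\mathbf V}^{k}(\mathbf Z a\mathbf Z^*)\widehat{\mathbf V}^{*m}\mathbf Z$ explicitly: since $\mathbf Z$ is the embedding onto the first summand and $\widehat{\mathbf V}^{n}$ has upper-triangular block form with corner $\mathbf V^n$, compressing back to $\mathcal H$ kills all the $l^2(\mathbf F\mathcal H)$-contributions and leaves $\mathbf V^{*m}\!\cdots$ — more precisely one gets an expression built from $\mathbf V$, $\mathbf V^*$ and $a$, hence an element of $\mathfrak A$ because $\mathbf V^*\mathfrak A\mathbf V\subset\mathfrak A$ and $\mathfrak A$ is a C*-algebra. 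Taking products, adjoints, linear combinations and norm limits, this yields $\mathbf Z^*\widehat{\mathfrak A}\mathbf Z\subset\mathfrak A$, which is the substantive half of item~1; the inclusion $\mathbf Z\mathfrak A\mathbf Z^*\subset\widehat{\mathfrak A}$ is immediate since those are exactly the $k=m=0$ generators (this also shows $\widehat{\mathfrak A}$ contains the unit, as $\mathbf Z\mathbf 1\mathbf Z^*\oplus(\text{tail})$ together with $\widehat{\mathbf V}$-translates generates $\widehat{\mathbf 1}$; if not one simply adjoins the unit).

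For item~2 the key observation is that conjugation by $\widehat{\mathbf V}$ permutes the generating set almost trivially: $\widehat{\mathbf V}^*\big(\widehat{\mathbf V}^{k}\mathbf Z a\mathbf Z^*\widehat{\mathbf V}^{*m}\big)\widehat{\mathbf V}=\widehat{\mathbf V}^{k-1}\mathbf Z a\mathbf Z^*\widehat{\mathbf V}^{*(m-1)}$ when $k,m\ge 1$, and when $k=0$ or $m=0$ one uses $\widehat{\mathbf V}^*\mathbf Z=\mathbf Z\mathbf V^*$ (a consequence of \eqref{dilatcontraz} since $\widehat{\mathbf V}$ is unitary and $\widehat{\mathbf V}\mathbf Z=\mathbf Z\mathbf V$ gives $\mathbf Z\mathbf V^*=\widehat{\mathbf V}^*\widehat{\mathbf V}\mathbf Z\mathbf V^*\cdots$ — here one must be slightly careful, since $\mathbf V^*$ is not multiplicative, and track that $\widehat{\mathbf V}^*\mathbf Z a\mathbf Z^*\widehat{\mathbf V}=\mathbf Z(\mathbf V^* a\mathbf V)\mathbf Z^* + (\text{lower terms involving }\mathbf F)$, and that those lower terms are themselves in $\widehat{\mathfrak A}$). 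Doing this on generators and extending by $*$-algebra operations and continuity gives $\widehat{\mathbf V}^*\widehat{\mathfrak A}\widehat{\mathbf V}\subset\widehat{\mathfrak A}$.

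Items~3 and~4 are then intertwining identities that I would verify directly at the level of operators on $\mathcal H$, before passing to $\widehat{\mathfrak A}$. For item~4, $\mathbf Z^*\widehat{\mathbf V}^*(\mathbf Z A\mathbf Z^*)\widehat{\mathbf V}\mathbf Z$: insert $\widehat{\mathbf V}\mathbf Z=\mathbf Z\mathbf V$ on the right and its adjoint $\mathbf Z^*\widehat{\mathbf V}^*=\mathbf V^*\mathbf Z^*$ on the left (the latter holding because $\widehat{\mathbf V}\mathbf Z=\mathbf Z\mathbf V$ implies, taking adjoints, $\mathbf Z^*\widehat{\mathbf V}^*=\mathbf V^*\mathbf Z^*$), so the whole thing collapses to $\mathbf V^*\mathbf Z^*\mathbf Z A\mathbf Z^*\mathbf Z\mathbf V=\mathbf V^*A\mathbf V$ using $\mathbf Z^*\mathbf Z=\mathbf I_{\mathcal H}$. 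For item~3 one writes $X\in\widehat{\mathfrak A}$, notes $\mathbf Z^*\widehat{\mathbf V}^*X\widehat{\mathbf V}\mathbf Z=(\mathbf V^*\mathbf Z^*)X(\mathbf Z\mathbf V)=\mathbf V^*(\mathbf Z^*X\mathbf Z)\mathbf V$, and recognizes the middle factor as an element of $\mathfrak A$ by item~1. The one genuine subtlety — the step I expect to be the main obstacle — is item~2: one must be sure the "lower-order" terms produced by conjugating a generator past $\widehat{\mathbf V}$ (the pieces of $\widehat{\mathbf V}$ involving $\mathbf F\Pi_0$ and $\mathbf W$) really do land back in the closed $*$-algebra $\widehat{\mathfrak A}$ and do not generate something new; this is where one needs the explicit block formulas \eqref{rel-1}–\eqref{rel-2} and a careful bookkeeping argument, possibly reorganizing the generating set so that it is manifestly $\widehat{\mathbf V}^*(\cdot)\widehat{\mathbf V}$-stable. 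Once the generating set is chosen correctly the rest is routine closure-under-operations bookkeeping.
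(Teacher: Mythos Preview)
Your proofs of items 3 and 4 are correct and match the paper: both follow immediately from $\widehat{\mathbf V}\mathbf Z=\mathbf Z\mathbf V$ and its adjoint $\mathbf Z^*\widehat{\mathbf V}^*=\mathbf V^*\mathbf Z^*$, and in fact hold for every $X\in\mathfrak B(\widehat{\mathcal H})$, not just $X\in\widehat{\mathfrak A}$. The real content of the proposition is the tension between items 1 and 2, and here your plan has a genuine gap.

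Your generating set collapses. Because $\widehat{\mathbf V}^{k}\mathbf Z=\mathbf Z\mathbf V^{k}$ and $\mathbf Z^*\widehat{\mathbf V}^{*m}=\mathbf V^{*m}\mathbf Z^*$, every generator $\widehat{\mathbf V}^{k}\mathbf Z a\mathbf Z^*\widehat{\mathbf V}^{*m}$ equals $\mathbf Z(\mathbf V^{k}a\mathbf V^{*m})\mathbf Z^*$, which lives entirely in the upper-left corner $\mathbf Z\mathfrak B(\mathcal H)\mathbf Z^*$ of the $2\times 2$ block decomposition. The C*-algebra they generate therefore sits inside $\mathbf Z\mathfrak B(\mathcal H)\mathbf Z^*$ (or $\mathbf Z\mathfrak B(\mathcal H)\mathbf Z^*+\mathbb C\widehat{\mathbf I}$ after adjoining the unit). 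This makes item~1 trivial but breaks item~2: the block computation you allude to gives
\[
\widehat{\mathbf V}^*(\mathbf Z a\mathbf Z^*)\widehat{\mathbf V}
=\left|\begin{array}{cc}\mathbf V^*a\mathbf V & \mathbf V^*a\mathbf F\Pi_0\\ \Pi_0^*\mathbf F a\mathbf V & \Pi_0^*\mathbf F a\mathbf F\Pi_0\end{array}\right|,
\]
and the three ``lower terms'' are \emph{not} in your $\widehat{\mathfrak A}$, because nothing in your algebra has a nonzero entry outside the $(1,1)$ block. So the step ``those lower terms are themselves in $\widehat{\mathfrak A}$'' fails, and no reorganisation of the same generating family can repair this: the family is, by the intertwining relation, intrinsically confined to the corner.

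This is exactly the difficulty the paper's machinery is built to handle. Rather than starting from the corner and trying to prove invariance, the paper enlarges the corner in a controlled way: it introduces the ``gamma operators'' $\Gamma(\alpha)=(\alpha|\mathbf F\Pi_{\dot\alpha-1}$ for the off-diagonal blocks and the ``napla operators'' $\Delta_k(A,\alpha,\beta)$ generating an algebra $\mathfrak X$ for the $(2,2)$ block, proves the algebraic closure relations $\Gamma(\alpha)\Gamma(\beta)^*\in\mathfrak A$ and the product rules \eqref{eq-napla-4}, and then shows (Proposition~5) that the resulting operator system $\mathcal S$ of $2\times 2$ matrices is $\widehat{\mathbf V}^*(\cdot)\widehat{\mathbf V}$-stable. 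Since the $(1,1)$ entry of every element of $\mathcal S$ lies in $\mathfrak A$ by construction, item~1 is automatic for $C^*(\mathcal S)$; a Zorn's-lemma argument then picks out a minimal $\widehat{\mathfrak A}$. In short, the bookkeeping you flag as ``the one genuine subtlety'' is the entire substance of the proof, and it requires building an explicit $\widehat{\mathbf V}$-stable class of off-corner operators whose compressions remain in $\mathfrak A$ --- your current generating set provides no such operators.
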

first of all we want to consider some special operators on Hilbert space 
$\mathcal{H}$.

\subsection{The gamma operators associated to pair $\left(\mathfrak{A},V\right) $}
The sequences of elements of type 
$ \alpha =( n_{1},n_{2}....n_{r},A_{1},A_{2}...A_{r})$, 
with 
$n_{j}\in \mathbb{N}$ 
and
$\ A_{j}\in \mathfrak{A}$ 
for all 
$j=1,2...r$,  
are called strings of 
$\mathfrak{A}$ 
of length $r$ and weight 
$\sum\limits_{i=1}^{n}n_{i}$. 
\newline
For each $\alpha $ string of $\mathfrak{A}$, we associate the following
operators of 
$\mathfrak{B}(\mathcal{H})$:

\begin{equation*}
|\alpha) =A_{1}\mathbf{V}^{n_{1}}\cdot \cdot \cdot A_{r}\mathbf{V}\text{ \ \ \ and\ \ \ \ } (\alpha| =\mathbf{V}^{n_{r}^*}A_{r}\cdot \cdot \cdot \mathbf{V}^{n_{1}^*}A_{1},
\end{equation*}
furthermore 
$\overset{.}{\alpha}= \sum\limits_{i=1}^{n}n_{i}$  
and 
$l(\alpha)=r$, while $|n)$ denote the set operators 
$|\alpha)$ 
with 
$\overset{\cdot}{\alpha }=n$ 
and usually
 
\begin{equation*}
|n) \mathfrak{A}=\left\{|\alpha) A:A\in 
\mathfrak{A}\text{ and }\alpha \text{-string of }\mathfrak{A}\text{ with }
\overset{\cdot }{\alpha }=n\right\} .
\end{equation*}
The symbols $\left( n\right\vert $ and $\mathfrak{A}\left( n\right\vert $ \
have the same obvious meaning of above.

\begin{proposition}
Let $\alpha $ and $\beta $ are strings of $\mathfrak{A}$ for each 
$R\in\mathfrak{A}$ 
we have: 
\begin{equation}
\left( \alpha \right\vert R\left\vert \beta \right) \in \left\{ 
\begin{array}{cc}
\mathfrak{A}\left(\overset{\cdot}{\alpha }-\overset{\cdot}{\beta}
\right\vert&\text{if \ }\overset{\cdot }{\alpha }\geq \overset{\cdot}{
\beta} \\ 
\left\vert \overset{\cdot }{\beta}-\overset{\cdot }{\alpha }\right) 
\mathfrak{A} & \text{if \ }\overset{\cdot }{\alpha }<\overset{\cdot }{\beta }%
\end{array}
\right. ,  
\label{eq-napla-1}
\end{equation}
and with a simple calculation 
\begin{equation}
\left\vert \alpha \right) R\left\vert \beta \right) \in \left\vert \overset{\cdot }{\alpha }+\overset{\cdot }{\beta }\right) .
\label{eq-napla-2}
\end{equation}
\end{proposition}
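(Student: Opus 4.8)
The plan is to prove the substantive relation \eqref{eq-napla-1} by collapsing the ``sandwiched'' word $(\alpha|R|\beta)$ one $\mathbf{V}^{*}\cdots\mathbf{V}$ pair at a time, starting from its innermost factor, and to dispose of \eqref{eq-napla-2} by a direct concatenation of strings. First I would record the elementary \textbf{contraction lemma}: for $E\in\mathfrak{A}$ and $p,q\in\mathbb{N}$, iterating the hypothesis $\mathbf{V}^{*}\mathfrak{A}\mathbf{V}\subset\mathfrak{A}$ gives $\mathbf{V}^{q*}E\mathbf{V}^{q}\in\mathfrak{A}$, hence
\[
\mathbf{V}^{p*}E\mathbf{V}^{q}=
\begin{cases}
\mathbf{V}^{(p-q)*}\bigl(\mathbf{V}^{q*}E\mathbf{V}^{q}\bigr) & \text{if }p\geq q,\\
\bigl(\mathbf{V}^{p*}E\mathbf{V}^{p}\bigr)\mathbf{V}^{q-p} & \text{if }p<q;
\end{cases}
\]
equivalently, $\mathbf{V}^{p*}E\mathbf{V}^{q}$ equals $\mathbf{V}^{(p-q)*}E'$ resp. $E'\mathbf{V}^{q-p}$ for a suitable $E'\in\mathfrak{A}$. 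Then \eqref{eq-napla-2} is immediate: writing $|\alpha)=A_{1}\mathbf{V}^{n_{1}}\cdots A_{r}\mathbf{V}^{n_{r}}$ and $|\beta)=B_{1}\mathbf{V}^{m_{1}}\cdots B_{s}\mathbf{V}^{m_{s}}$, the only junction in $|\alpha)R|\beta)$ is $\mathbf{V}^{n_{r}}RB_{1}=\mathbf{V}^{n_{r}}(RB_{1})$, so $|\alpha)R|\beta)=|\gamma)$ for the string $\gamma$ of length $r+s$ with exponent list $(n_{1},\dots,n_{r},m_{1},\dots,m_{s})$ and element list $(A_{1},\dots,A_{r},RB_{1},B_{2},\dots,B_{s})$, which has weight $\dot\alpha+\dot\beta$; hence $|\alpha)R|\beta)\in|\dot\alpha+\dot\beta)$.

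For \eqref{eq-napla-1} I would induct on $l(\alpha)+l(\beta)$. With $(\alpha|=\mathbf{V}^{n_{r}*}A_{r}\cdots\mathbf{V}^{n_{1}*}A_{1}$, write
\[
(\alpha|R|\beta)=\mathbf{V}^{n_{r}*}A_{r}\cdots\mathbf{V}^{n_{2}*}A_{2}\;\mathbf{V}^{n_{1}*}(A_{1}RB_{1})\mathbf{V}^{m_{1}}\;B_{2}\mathbf{V}^{m_{2}}\cdots B_{s}\mathbf{V}^{m_{s}},
\]
and apply the contraction lemma to the central block $\mathbf{V}^{n_{1}*}(A_{1}RB_{1})\mathbf{V}^{m_{1}}$, which becomes $\mathbf{V}^{(n_{1}-m_{1})*}E'$ if $n_{1}\geq m_{1}$ and $E'\mathbf{V}^{m_{1}-n_{1}}$ if $n_{1}<m_{1}$, with $E'\in\mathfrak{A}$. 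Absorbing $E'$ into the neighbouring algebra element, and padding with a unit where a factor needs an element of $\mathfrak{A}$ at its outer end in order to be a string, one of two things happens. If the ``outer remainder'' has become empty — precisely when $n_{1}\geq m_{1}$ and $l(\beta)=1$, resp. $n_{1}<m_{1}$ and $l(\alpha)=1$ — the operator is a single string $(\gamma|$ of weight $\dot\alpha-\dot\beta\geq0$, resp. $|\gamma)$ of weight $\dot\beta-\dot\alpha>0$, which lies in $\mathfrak{A}(\dot\alpha-\dot\beta|$ resp. $|\dot\beta-\dot\alpha)\mathfrak{A}$ (take the algebra factor to be the unit). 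Otherwise the operator has the form $(\tilde\alpha|\,\mathbf{I}\,|\tilde\beta)$ with $l(\tilde\alpha)+l(\tilde\beta)=l(\alpha)+l(\beta)-1$ and $\dot{\tilde\alpha}=\dot\alpha-\min(n_{1},m_{1})$, $\dot{\tilde\beta}=\dot\beta-\min(n_{1},m_{1})$, so that $\dot{\tilde\alpha}-\dot{\tilde\beta}=\dot\alpha-\dot\beta$ and the induction hypothesis applies. The base $l(\alpha)+l(\beta)=2$ needs no separate argument, since there the ``outer remainder'' is always empty.

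I expect the only real effort to lie in this bookkeeping: verifying that after a single contraction the operator genuinely reassembles as a product of (at most) two strings with the claimed unit decrease in total length and the \emph{same} weight difference, and keeping the subcases $n_{1}\gtrless m_{1}$ and the degenerate empty-remainder situations straight. A handy consistency check is the adjoint identity $\bigl((\alpha|R|\beta)\bigr)^{*}=(\beta^{*}|R^{*}|\alpha^{*})$ (where $\gamma\mapsto\gamma^{*}$ stars every algebra entry of $\gamma$) combined with $\bigl(\mathfrak{A}(k|\bigr)^{*}=|k)\mathfrak{A}$, which swaps the two alternatives in \eqref{eq-napla-1}; but the contraction step already produces both forms according to the sign of $n_{1}-m_{1}$, so the single induction above is enough.
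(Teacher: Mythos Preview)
Your proposal is correct and follows essentially the same route as the paper: both record the elementary contraction identity $\mathbf{V}^{p*}E\mathbf{V}^{q}\in\mathbf{V}^{(p-q)*}\mathfrak{A}$ or $\mathfrak{A}\mathbf{V}^{q-p}$, then collapse the innermost $\mathbf{V}^{*}\!\cdots\mathbf{V}$ pair of $(\alpha|R|\beta)$ to obtain $(\tilde\alpha|\mathbf{I}|\tilde\beta)$ with $l(\tilde\alpha)+l(\tilde\beta)=l(\alpha)+l(\beta)-1$, and induct on total length. Your write-up is in fact a bit more careful than the paper's, which only records that the \emph{sign} of $\dot\alpha-\dot\beta$ is preserved under the reduction, whereas you note (correctly, and as actually needed) that the difference itself is preserved and you handle the empty-remainder base cases explicitly.
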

\begin {proof}
For each $m,n\in \mathbb{N}$ and $R\in \mathfrak{A}$ we have:
\begin{equation}
\mathbf{V}^{m^{\ast }}R\mathbf{V}^{n}\mathbf{\in }\left\{ 
\begin{array}{cc}
\mathbf{V}^{\left( m-n\right)^{\ast }}\mathfrak{A} & m\geq n \\ 
\mathfrak{A}\mathbf{V}^{\left( n-m\right)} & m<n
\end{array}
\right .  
\label{eq-napla-3}
\end{equation}
Let 
$\alpha =( m_{1},m_{2}....m_{r},A_{1},A_{2}...A_{r}) $ 
and 
$\beta =( n_{1},n_{2}....n_{s},B_{1},B_{2}...B_{S}) $ 
 strings of $\mathfrak{A}$,
 we obtain:
\begin{equation*}
\left( \alpha \right\vert R\left\vert \beta \right) =\mathbf{V}^{m_{r}^{\ast
}}A_{r}\cdot \cdot \cdot \mathbf{V}^{m_{1}^{\ast }}A_{1}RB_{1}\mathbf{V}%
^{n_{1}}\cdot \cdot \cdot B_{s}\mathbf{V}^{n_{s}}=\left( \widetilde{\alpha }%
\right\vert \mathbf{I}\left\vert \widetilde{\beta }\right) 
\end{equation*}
where 
$\widetilde{\alpha }$ 
and 
$\widetilde{\beta }$ 
are strings of 
$\mathfrak{A}$ 
with  
$l\left(\widetilde{\alpha }\right)+l\left(\widetilde{\beta }\right)=l\left( \alpha \right) +l\left( \beta \right) -1$. 
Moreover if 
$\overset{\cdot }{\alpha }\geq \overset{\cdot }{\beta }$ 
we have 
$\overset{\cdot }{\widetilde{\alpha }}\geq \overset{\cdot }{\widetilde{\beta }}$
while if  
$\overset{\cdot }{\alpha }<\overset{\cdot }{\beta }$ 
it follows that 
$\overset{\cdot }{\widetilde{\alpha }}<\overset{\cdot }{\widetilde{
\beta }}$.
\newline
In fact if $m_{1}\geq n_{1}$ we obtain:
\begin{equation*}
\left( \alpha \right\vert R\left\vert \beta \right) =\mathbf{V}^{m_{r}^{\ast
}}A_{r}\cdot \cdot \cdot A_{2}\mathbf{V}^{\left( m_{1}-n_{1}\right) ^{\ast
}}R_{1}B_{2}\mathbf{V}^{n_{2}}\cdot \cdot \cdot B_{s}\mathbf{V}%
^{n_{s}}=\left( \widetilde{\alpha }\right\vert \mathbf{I}\left\vert 
\widetilde{\beta }\right),
\end{equation*}
where 
$R_{1}=\mathbf{V}^{n_{1}^{\ast }}A_{1}RB_{1}\mathbf{V}^{n_{1}},$ 
$\widetilde{\alpha }=\left(m_{1}-n_{1},m_{2}....m_{r},R_{1},A_{2}...A_{r}\right)$ 
and 
$\widetilde{\beta }=\left( n_{2}....n_{s},B_{2}...B_{S}\right)$.
\newline
If $m_{1}<n_{1}$ we can write:
\begin{equation*}
\left( \alpha \right\vert R\left\vert \beta \right) =\mathbf{V}^{m_{r}^{\ast
}}A_{r}\cdot \cdot \cdot \mathbf{V}^{m_{2}^{\ast }}A_{2}R_{1}\mathbf{V}%
^{\left( n_{1}-m_{1}\right) }B_{2}\cdot \cdot \cdot B_{s}\mathbf{V}%
^{n_{s}}=\left( \widetilde{\alpha }\right\vert \mathbf{I}\left\vert 
\widetilde{\beta }\right) ,
\end{equation*}%
where 
$R_{1}=\mathbf{V}^{m_{1}^{\ast }}A_{1}RB_{1}\mathbf{V}^{m_{1}},$ 
$\widetilde{\alpha }=\left( m_{2}....m_{r},A_{2}...A_{r}\right)$ 
and 
$\widetilde{\beta }=\left(n_{1}-m_{1},n_{2}....n_{s},R_{1},B_{2}...B_{S}\right)$.
\newline
Then by induction on number 
$\nu =l(\alpha) +l(\beta)$ 
we have the relationship \ref{eq-napla-1}.
\end {proof}
For each 
$\alpha $ 
string of 
$\mathfrak{A}$ 
with  
$\overset{\cdot }{\alpha}\geq1$, 
we define the linear operators:
\begin{equation*}
\Gamma(\alpha) =(\alpha\vert\mathbf{F\Pi}_{\overset{\cdot }{\alpha }-1},
\end{equation*}
that will be the gamma associated operators to the pair 
$(\mathfrak{A},\mathbf{V})$. 

\begin{proposition}
For each $\alpha $ and $\beta $ strings of $\mathfrak{A}$ with 
$\overset{\cdot}{\alpha},\overset{\cdot }{\beta }\geq 1$, 
the gamma operators associated to 
$ (\mathfrak{A},\mathbf{V}) $ 
satisfy the following relationship:
\begin{equation*}
\Gamma(\alpha)\cdot\Gamma(\beta)^*\in \mathfrak{A.}
\end{equation*}
\end{proposition}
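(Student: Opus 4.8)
The plan is to collapse the product $\Gamma(\alpha)\,\Gamma(\beta)^{*}$ down onto $\mathcal{H}$ by exploiting the orthogonality of the canonical projections $\Pi_{j}$, and then to identify the surviving operator on $\mathcal{H}$ as a difference of operators of the type $(\widetilde{\alpha}|\,\mathbf{I}\,|\widetilde{\beta})$, to which relation \ref{eq-napla-1} of the previous proposition applies. First I would record two elementary facts. Since $(\beta|$ is a word in the operators $A_{j}$ and in powers of $\mathbf{V}^{*}$, taking adjoints gives $(\beta|^{*}=|\beta^{\dagger})$, where $\beta^{\dagger}$ denotes the string obtained from $\beta$ by replacing every $A_{j}$ with $A_{j}^{*}$; in particular $\beta^{\dagger}$ is again a string of $\mathfrak{A}$ with $\overset{\cdot}{\beta^{\dagger}}=\overset{\cdot}{\beta}$, and $\Gamma(\beta)^{*}=\Pi_{\overset{\cdot}{\beta}-1}^{*}\,\mathbf{F}\,|\beta^{\dagger})$. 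Secondly, because each coordinate of a vector of $l^{2}(\mathbf{F}\mathcal{H})$ lies in $\mathbf{F}\mathcal{H}$, one computes $\Pi_{j}^{*}\Psi=(0,\dots,0,\mathbf{F}\Psi,0,\dots)$ with $\mathbf{F}\Psi$ in the $j$-th slot, whence $\Pi_{i}\,\Pi_{j}^{*}=\delta_{ij}\,\mathbf{F}$. Combining these with $\mathbf{F}^{2}=\mathbf{F}$ yields
\[
\Gamma(\alpha)\,\Gamma(\beta)^{*}=(\alpha|\,\mathbf{F}\,\Pi_{\overset{\cdot}{\alpha}-1}\,\Pi_{\overset{\cdot}{\beta}-1}^{*}\,\mathbf{F}\,|\beta^{\dagger})=\delta_{\overset{\cdot}{\alpha},\,\overset{\cdot}{\beta}}\;(\alpha|\,\mathbf{F}\,|\beta^{\dagger}),
\]
so the claim is trivial when $\overset{\cdot}{\alpha}\neq\overset{\cdot}{\beta}$, and it remains to show that $(\alpha|\,\mathbf{F}\,|\beta^{\dagger})\in\mathfrak{A}$ when $\overset{\cdot}{\alpha}=\overset{\cdot}{\beta}$.

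Writing $\mathbf{F}=\mathbf{I}-\mathbf{V}\mathbf{V}^{*}$ splits this operator as $(\alpha|\,\mathbf{I}\,|\beta^{\dagger})-(\alpha|\,\mathbf{V}\mathbf{V}^{*}\,|\beta^{\dagger})$. The first summand already lies in $\mathfrak{A}$: by \ref{eq-napla-1}, since $\overset{\cdot}{\alpha}=\overset{\cdot}{\beta^{\dagger}}$ it belongs to $\mathfrak{A}(0|$, and an operator $(\gamma|$ attached to a string of weight $0$ is merely a product of elements of $\mathfrak{A}$, so $\mathfrak{A}(0|\subset\mathfrak{A}$. For the second summand the key step — and the place where the hypotheses $\overset{\cdot}{\alpha},\overset{\cdot}{\beta}\geq1$ and $\mathbf{V}^{*}\mathfrak{A}\mathbf{V}\subset\mathfrak{A}$ enter — is the reduction $(\alpha|\,\mathbf{V}=(\alpha'|$ and $\mathbf{V}^{*}\,|\beta^{\dagger})=|\beta')$ for suitable strings $\alpha',\beta'$ of $\mathfrak{A}$ with $\overset{\cdot}{\alpha'}=\overset{\cdot}{\alpha}-1$ and $\overset{\cdot}{\beta'}=\overset{\cdot}{\beta}-1$. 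To see this, write $\alpha=(n_{1},\dots,n_{r},A_{1},\dots,A_{r})$, so that $(\alpha|\,\mathbf{V}=\mathbf{V}^{n_{r}^{*}}A_{r}\cdots\mathbf{V}^{n_{1}^{*}}A_{1}\mathbf{V}$; the block to simplify is $\mathbf{V}^{n_{1}^{*}}A_{1}\mathbf{V}$, which equals $\mathbf{V}^{(n_{1}-1)^{*}}(\mathbf{V}^{*}A_{1}\mathbf{V})$ with $\mathbf{V}^{*}A_{1}\mathbf{V}\in\mathfrak{A}$ when $n_{1}\geq1$, while if $n_{1}=0$ one absorbs $A_{1}$ into the next letter $A_{2}$ and repeats. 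Since $\overset{\cdot}{\alpha}\geq1$ some exponent is positive, so this procedure terminates and lowers the weight by exactly one; the reduction of $\mathbf{V}^{*}\,|\beta^{\dagger})$ is entirely symmetric.

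It then follows that $(\alpha|\,\mathbf{V}\mathbf{V}^{*}\,|\beta^{\dagger})=(\alpha'|\,\mathbf{I}\,|\beta')$ with $\overset{\cdot}{\alpha'}=\overset{\cdot}{\beta'}$, which again lies in $\mathfrak{A}(0|=\mathfrak{A}$ by \ref{eq-napla-1}; hence $(\alpha|\,\mathbf{F}\,|\beta^{\dagger})\in\mathfrak{A}$ and the proposition is proved. I expect the only genuinely delicate point to be the bookkeeping in the reduction $(\alpha|\,\mathbf{V}=(\alpha'|$ and its adjoint version: one must check that absorbing the surplus $\mathbf{V}$ (respectively $\mathbf{V}^{*}$) never runs past the end of the string, which is guaranteed precisely because the total weight is at least one; granting that, everything else is a routine computation.
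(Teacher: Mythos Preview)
Your proof is correct and follows essentially the same route as the paper: collapse $\Pi_{\overset{\cdot}{\alpha}-1}\Pi_{\overset{\cdot}{\beta}-1}^{*}$ to a Kronecker delta, split $\mathbf{F}=\mathbf{I}-\mathbf{V}\mathbf{V}^{*}$, then use $(\alpha|\mathbf{V}\in(\overset{\cdot}{\alpha}-1|$ and $\mathbf{V}^{*}|\beta)\in|\overset{\cdot}{\beta}-1)$ together with relation \ref{eq-napla-1}. Your version is in fact slightly more careful than the paper's, which silently writes $|\beta)$ in place of your $|\beta^{\dagger})$ and simply asserts the weight-drop $(\alpha|\mathbf{V}\in(\overset{\cdot}{\alpha}-1|$ without the bookkeeping you spell out.
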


\begin {proof}
We obtain:
\begin{equation*}
\Gamma(\alpha)\cdot\Gamma(\beta)^*=(\alpha\vert\mathbf{F\Pi }_{\overset{\cdot }{\alpha }-1}
\mathbf{\Pi }_{\overset{\cdot }{\beta }-1}^{^{\ast}}\mathbf{F}\left\vert
\beta \right) =\left\{ 
\begin{array}{cc}
\left( \alpha \right\vert \mathbf{F}\left\vert \beta \right)  & \overset{
\cdot }{\alpha }=\overset{\cdot }{\beta } \\ 
0 & \overset{\cdot }{\alpha }\neq \overset{\cdot }{\beta }
\end{array}
\right. ,
\end{equation*}
in fact
\begin{equation*}
 \left(\alpha \right\vert \mathbf{F}\left\vert \beta \right)=\left(\alpha
\right\vert \left( \mathbf{I}-\mathbf{VV}^{\ast }\right)\left\vert\alpha
\right)=\left( \alpha \right\vert \mathbf{I}\left\vert \alpha \right)
-\left( \alpha \right\vert \mathbf{VV}^{\ast }\left\vert \alpha \right) \in 
\mathfrak{A},
\end{equation*} 
since we have 
$\left(\alpha \right\vert\mathbf{V}\in \left(\overset{\cdot}{\alpha}-1\right\vert\ $ 
while 
$\mathbf{V}^{\ast}\left\vert\alpha
\right) \in \left\vert \overset{\cdot}{\alpha }-1\right) $ 
and by relationship \ref{eq-napla-1} follows that:
\begin{equation*}
\left(\overset{\cdot }{\alpha }-1\right\vert \mathbf{I}
\left\vert \overset{\cdot }{\alpha }-1\right) \subset \mathfrak{A}.
\end{equation*}
\end {proof}
We have an operator system 
$\Sigma $ 
of 
$\mathfrak{B}(l^{2}(\mathbf{F}\mathcal{H}))$ 
this is:
\begin{equation}
\Sigma =\left\{\mathbf{T}\in \mathfrak{B}( l^{2}(\mathbf{F}
\mathcal{H})) :\Gamma _{1}\mathbf{T}\Gamma _{2}^{\ast }\in 
\mathfrak{A}\ \  for\ all \ gamma \ operators \ \Gamma _{i} \ associated \ to \ (\mathfrak{A},\mathbf{V} \right\}. \label{eq-sigma-op}
\end{equation}

We observe that 
$\mathbf{I}\in \Sigma $ 
and 
$\Gamma_{1}^{\ast }\mathfrak{A}\Gamma _{2}\in \Sigma $ 
for all gamma operators 
$\Gamma_{i}.$ 
Moreover 
$ \Sigma $ 
is a norm closed, while it is a weakly closed if $\mathfrak{A}$ is
a W*-algebra.

\subsection{The napla operators}
For each 
$\alpha $, $\beta $ 
strings of 
$\mathfrak{A}$, $A\in \mathfrak{A}$
and 
$k\in \mathbb{N}$ 
we define the napla operators of 
$\mathfrak{B}(l^{2}( \mathbf{F}\mathcal{H}))$:
\begin{equation*}
\Delta_{k}(A,\alpha ,\beta) 
=\Pi_{\overset{\cdot}{\alpha}+k}^{^{\ast }}\mathbf{F}|\alpha)A(\beta| 
\mathbf{F}\Pi_{\overset{\cdot}{\beta }+k}.
\end{equation*}
For each $h,k\geq 0$ we obtain the following results: 
\begin{equation*}
\Delta_{k}(A,\alpha,\beta)^*=\Delta_{k}(A^*,\beta,\alpha) ,
\end{equation*}
and
\begin{equation}
\Delta_{k}(A,\alpha,\beta) \cdot \Delta _{h}(B,\gamma,\delta) =\left\{ 
\begin{array}{ccc}
0 & k+\overset{.}{\beta }\neq h+\overset{.}{\gamma }, &  \\ 
\Delta _{k}\left( R,\alpha ,\vartheta \right) & k+\overset{.}{\beta }=h+
\overset{.}{\gamma }, \ h-k\geq 0, \ with \  \overset{.}{\vartheta} =\overset{.}{\delta }+h-k \ and \ R\in \mathfrak{A} \\ 
\Delta _{h}\left( R,\vartheta ,\delta \right)  & k+\overset{.}{\beta }=h+%
\overset{.}{\gamma}, \ k-h>0, \ with \ \overset{.}{\vartheta} =\overset{.}{\delta }+k-h \ and \ R\in \mathfrak{A}
\end{array}
\right.  \label{eq-napla-4}
\end{equation}
In fact we have:
\begin{equation*}
\Delta _{k}\left( A,\alpha ,\beta \right) \cdot \Delta _{h}\left( B,\gamma
,\delta \right) =\Pi _{\overset{\cdot }{\alpha }+k}^{^{\ast }}\mathbf{F}
\left\vert \alpha \right) A\left( \beta \right\vert \mathbf{F}\Pi _{\overset{
\cdot}{\beta }+k}\Pi_{\overset{\cdot }{\gamma }+h}^{^{\ast }}\mathbf{F}
\left\vert \gamma \right) B\left( \delta \right\vert \mathbf{F}\Pi _{\overset
{\cdot}{\delta}+h}
\end{equation*}
and if 
$k+\overset{.}{\beta }\neq h+\overset{.}{\gamma }$ 
follows that 
$\Pi_{\overset{\cdot }{\beta }+k}\Pi_{\overset{\cdot }{\gamma }+h}^{^{\ast}}=0,$
while if 
$k+\overset{.}{\beta }=h+\overset{.}{\gamma }$, 
without losing generality we can get 
$h\geq k$, 
and we obtain
$\overset{.}{\beta }=\overset{.}{\gamma }+h-k\geq \overset{.}{\gamma }$ .
Moreover by relationship \ref{eq-napla-1}
\begin{equation*}
\left( \beta \right\vert \mathbf{F}\left\vert \gamma \right) \in \mathfrak{A}%
\left( \overset{.}{\beta }-\overset{.}{\gamma }\right\vert 
\end{equation*}
then 
\begin{equation*}
A\left(\beta \right\vert \mathbf{F}\left\vert \gamma \right) B\left(\delta
\right\vert \in \mathfrak{A}\left(\overset{\cdot}{\delta }+\overset{.}{
\beta }-\overset{.}{\gamma }\right\vert ,
\end{equation*}
there exists 
$\vartheta $ 
string of 
$\mathfrak{A}$ 
with 
$\overset{\cdot }{\vartheta }=\overset{\cdot }{\delta }+\overset{.}{\beta }-\overset{.}{\gamma}$ 
and a 
$R\in \mathfrak{A}$ 
such that:
\begin{equation*}
A\left( \beta \right\vert \mathbf{F}\left\vert \gamma \right) B\left( \delta
\right\vert =R\left( \vartheta \right\vert .
\end{equation*}
Since 
$\overset{\cdot}{\vartheta}=\overset{\cdot}{\delta}+h-k$ 
we have:
\begin{equation*}
\Delta _{k}\left(A,\alpha,\beta \right) \cdot \Delta _{h}\left( B,\gamma
,\delta \right) =\Pi _{\overset{\cdot }{\alpha }+k}^{^{\ast}}\mathbf{F}\left\vert \alpha \right) R\left(\vartheta \right\vert \mathbf{F}\Pi_{\overset{\cdot }{\delta}+h}=\Pi_{\overset{\cdot}{\alpha }+k}^{^{\ast}}\mathbf{F}\left\vert\alpha \right) R\left(\vartheta \right\vert \mathbf{F}\Pi_{\overset{\cdot}{\vartheta}+k}=\Delta_{k}\left( R,\alpha,\vartheta
\right) .
\end{equation*}

\begin{proposition}
The linear space 
$\mathfrak{X}_{o}$ 
generated by napla operators, is a *-subalgebra of 
$\mathfrak{B}\left(l^{2}\left(\mathbf{F}\mathcal{H}\right)
\right)\ $ 
included in the operator systems $\Sigma $ defined in \ref{eq-sigma-op}.
\end{proposition}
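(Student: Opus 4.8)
I would prove the two assertions separately. \emph{First, that $\mathfrak{X}_o$ is a $*$-subalgebra.} Being the linear span of the napla operators $\Delta_k(A,\alpha,\beta)$, each of which is bounded (a composition $\Pi_{\dot\alpha+k}^{*}\mathbf{F}\,|\alpha)A(\beta|\,\mathbf{F}\Pi_{\dot\beta+k}$ of bounded maps), $\mathfrak{X}_o$ is a linear subspace of $\mathfrak{B}(l^{2}(\mathbf{F}\mathcal{H}))$. It is self-adjoint because $\Delta_k(A,\alpha,\beta)^{*}=\Delta_k(A^{*},\beta,\alpha)$, recorded just above (a string remains a string when its algebra entries are starred, $\mathfrak{A}$ being a $*$-algebra), and this passes termwise to finite sums. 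It is closed under multiplication by \ref{eq-napla-4}: the product of two napla operators is either $0$ or again a single napla operator, so expanding a product of two elements of $\mathfrak{X}_o$ bilinearly over their defining finite sums exhibits it as a finite sum of napla operators, hence as an element of $\mathfrak{X}_o$. Thus $\mathfrak{X}_o$ is a $*$-subalgebra of $\mathfrak{B}(l^{2}(\mathbf{F}\mathcal{H}))$ (not claimed unital or norm closed).

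\emph{Second, that $\mathfrak{X}_o\subset\Sigma$.} The defining condition \ref{eq-sigma-op} of $\Sigma$ is linear in the tested operator, so it is enough to show that $\Gamma(\mu)\,\Delta_k(A,\alpha,\beta)\,\Gamma(\nu)^{*}\in\mathfrak{A}$ for every napla operator and every pair of gamma operators $\Gamma(\mu)=(\mu|\mathbf{F}\Pi_{\dot\mu-1}$ and $\Gamma(\nu)=(\nu|\mathbf{F}\Pi_{\dot\nu-1}$. Using the elementary facts $\mathbf{F}\Pi_j=\Pi_j$, $\Pi_j^{*}\mathbf{F}=\Pi_j^{*}$, $\Pi_n\Pi_m^{*}=\delta_{nm}\mathbf{F}$ and $\mathbf{F}^{2}=\mathbf{F}$, one computes
\[
\Gamma(\mu)\,\Delta_k(A,\alpha,\beta)\,\Gamma(\nu)^{*}=(\mu|\,\mathbf{F}\,\Pi_{\dot\mu-1}\Pi_{\dot\alpha+k}^{*}\,\mathbf{F}\,|\alpha)\,A\,(\beta|\,\mathbf{F}\,\Pi_{\dot\beta+k}\Pi_{\dot\nu-1}^{*}\,\mathbf{F}\,|\nu),
\]
which vanishes unless $\dot\mu=\dot\alpha+k+1$ and $\dot\nu=\dot\beta+k+1$, and in that case equals $(\mu|\mathbf{F}|\alpha)\,A\,(\beta|\mathbf{F}|\nu)$.

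\emph{The key step} is to show this last expression lies in $\mathfrak{A}$, via the two memberships $(\mu|\mathbf{F}|\alpha)\in\mathfrak{A}(k+1|$ and $(\beta|\mathbf{F}|\nu)\in|k+1)\mathfrak{A}$ (note $\dot\mu-\dot\alpha=\dot\nu-\dot\beta=k+1$). For the first, write $\mathbf{F}=\mathbf{I}-\mathbf{V}\mathbf{V}^{*}$, so $(\mu|\mathbf{F}|\alpha)=(\mu|\mathbf{I}|\alpha)-(\mu|\mathbf{V}\mathbf{V}^{*}|\alpha)$; since $\dot\mu>\dot\alpha$ the first summand is in $\mathfrak{A}(k+1|$ by \ref{eq-napla-1}, and for the second one uses $(\mu|\mathbf{V}\in(\dot\mu-1|$ together with $\mathbf{V}^{*}|\alpha)\in|\dot\alpha-1)$ (the two facts already invoked in the proof that $\Gamma(\alpha)\Gamma(\beta)^{*}\in\mathfrak{A}$, themselves consequences of \ref{eq-napla-3}) and then \ref{eq-napla-1} once more to put $(\dot\mu-1|\,\mathbf{I}\,|\dot\alpha-1)$ inside $\mathfrak{A}(k+1|$; the second membership is symmetric. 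Hence $(\mu|\mathbf{F}|\alpha)=A_0(\theta|$ and $(\beta|\mathbf{F}|\nu)=|\rho)B_0$ with $A_0,B_0\in\mathfrak{A}$ and $\dot\theta=\dot\rho=k+1$, so $\Gamma(\mu)\Delta_k(A,\alpha,\beta)\Gamma(\nu)^{*}=A_0\,(\theta|A|\rho)\,B_0$, and a last use of \ref{eq-napla-1} in the equal-weight case gives $(\theta|A|\rho)\in\mathfrak{A}(0|=\mathfrak{A}$, whence the product is in $\mathfrak{A}$. This proves $\Delta_k(A,\alpha,\beta)\in\Sigma$, hence $\mathfrak{X}_o\subset\Sigma$. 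I expect the main obstacle to be purely bookkeeping: keeping track, in each intermediate term, of whether it lands on the $\mathfrak{A}(\,\cdot\,|$ side or the $|\,\cdot\,)\mathfrak{A}$ side and of how a weight changes when a $\mathbf{V}$ or $\mathbf{V}^{*}$ is peeled off a string (which may first require merging consecutive algebra entries so that a leading exponent is $\geq 1$), together with the degenerate cases $\dot\alpha=0$ or $\dot\beta=0$, where one instead expands $\mathbf{F}C\mathbf{F}$ with $C\in\mathfrak{A}$ directly and applies \ref{eq-napla-1} four times.
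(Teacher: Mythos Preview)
Your proposal is correct and follows essentially the same route as the paper's proof: the $*$-subalgebra claim is disposed of via the adjoint formula and \ref{eq-napla-4}, and the inclusion $\mathfrak{X}_o\subset\Sigma$ is obtained by computing $\Gamma(\mu)\Delta_k(A,\alpha,\beta)\Gamma(\nu)^{*}$, reducing to $(\mu|\mathbf{F}|\alpha)\,A\,(\beta|\mathbf{F}|\nu)$ under the index constraints, and then using $\mathbf{F}=\mathbf{I}-\mathbf{VV}^{*}$ together with \ref{eq-napla-1} to land in $\mathfrak{A}(k{+}1|\cdot\mathfrak{A}\cdot|k{+}1)\mathfrak{A}\subset\mathfrak{A}$. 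Your treatment is in fact slightly more explicit than the paper's on two points: you spell out the final application of \ref{eq-napla-1} in the equal-weight case, and you flag the degenerate situations $\dot\alpha=0$ or $\dot\beta=0$ (where $\mathbf{V}^{*}|\alpha)\in|\dot\alpha-1)$ is not available and one expands $\mathbf{F}C\mathbf{F}$ directly), which the paper leaves implicit.
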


\begin {proof}
From relationship 
\ref{eq-napla-4} 
the linear space 
$\mathfrak{X}_{o}$\ 
is a *-algebra. Moreover for each gamma operators 
$\Gamma\left(\alpha \right) $ 
and 
$\Gamma\left(\beta\right) $ 
we obtain:
\begin{equation*}
\Gamma\left(\alpha\right)\Delta _{k}\left(A,\gamma,\delta \right)
\Gamma\left(\beta \right)^{\ast }=\left(\alpha \right\vert\mathbf{
F\Pi }_{\overset{\cdot }{\alpha }-1}\Pi_{\overset{\cdot}{\gamma}
+k}^{^{\ast }}\mathbf{F}\left\vert\gamma \right) A\left(\delta \right\vert 
\mathbf{F}\Pi_{\overset{\cdot}{\delta}+k}\mathbf{\Pi}_{\overset{\cdot}{
\beta}-1}\mathbf{F}\left\vert \beta \right) \in \mathfrak{A},
\end{equation*}
since by the relationships \ref{eq-napla-1} and \ref{eq-napla-2} we have
\begin{equation*}
\left(\alpha \right\vert\mathbf{F\Pi }_{\overset{\cdot }{\alpha }-1}\Pi_{\overset{\cdot}{\gamma }+k}^{^{\ast}}\mathbf{F}\left\vert\gamma\right)
A\left(\delta\right\vert\mathbf{F}\Pi_{\overset{\cdot}{\delta}+k}
\mathbf{\Pi }_{\overset{\cdot}{\beta}-1}\mathbf{F}\left\vert\beta\right)
\in \left\{
\begin{array}{cc}
\left( k+1\right\vert\mathfrak{A}\left\vert k+1\right)  & \overset{.}{\alpha }-1=\overset{.}{\gamma }+k, \ \overset{.}{\beta }-1=\overset{.}{\delta }+k \\ 
\mathbf{0} & \text{elsewhere}%
\end{array}
\right.
\end{equation*}
In fact if 
$\overset{.}{\alpha }=\overset{.}{\gamma }+k+1\ $
we can write:
\begin{equation*}
\left( \alpha \right\vert \mathbf{F\Pi }_{\overset{\cdot }{\alpha }-1}\Pi _{%
\overset{\cdot }{\gamma }+k}^{^{\ast }}\mathbf{F}\left\vert \gamma \right)
=\left( \alpha \right\vert \mathbf{F}\left\vert \gamma \right) =\left(
\alpha \right\vert \mathbf{I}\left\vert \gamma \right) -\left( \alpha
\right\vert \mathbf{VV}^{\ast }\left\vert \gamma \right) \in \mathfrak{A}%
\left( k+1\right\vert 
\end{equation*}
since 
\begin{equation*}
\left( \alpha \right\vert \mathbf{I}\left\vert \gamma \right) \in \mathfrak{A
}\left( k+1\right\vert \text{ and }\left( \alpha \right\vert \mathbf{VV}
^{\ast }\left\vert \gamma \right) \in \mathfrak{A}\left( k+1\right\vert 
\end{equation*}
while if 
$\overset{.}{\beta }=\overset{.}{\delta }+k+1$ 
we obtain
\begin{equation*}
\left( \delta \right\vert \mathbf{F}\Pi _{\overset{\cdot }{\delta }+k}%
\mathbf{\Pi }_{\overset{\cdot }{\beta }-1}\mathbf{F}\left\vert \beta \right)
\in \left( k+1\right\vert \mathfrak{A}.
\end{equation*}

\end {proof}

\begin{corollary}
The *-algebra 
$\mathfrak{X}_{o}$ 
and the operator systems $\Sigma$ are $\mathbf{W}$-invariant:
\begin{equation*}
\mathbf{W}^{\ast}\mathfrak{X}_{o}\mathbf{W}\subset\mathfrak{X}_{o}\text{ \
and \ }\mathbf{W}^{\ast}\Sigma\mathbf{W}\subset\Sigma.
\end{equation*}
\end{corollary}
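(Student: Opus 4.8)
I would treat the two inclusions separately, the first being a one-line shift computation and the second resting on a factorisation of the strings.

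For the algebra $\mathfrak{X}_{o}$ it suffices, since $\mathfrak{X}_{o}$ is by definition the linear span of the napla operators, to conjugate a single $\Delta_{k}(A,\alpha,\beta)$. From $\Pi_{n}\mathbf{W}=\Pi_{n+1}$ in \ref{rel-1} one gets, by taking adjoints, $\mathbf{W}^{\ast}\Pi_{n}^{\ast}=\Pi_{n+1}^{\ast}$, and hence
\[
\mathbf{W}^{\ast}\Delta_{k}(A,\alpha,\beta)\mathbf{W}
=\mathbf{W}^{\ast}\Pi_{\overset{\cdot}{\alpha}+k}^{\ast}\mathbf{F}|\alpha)A(\beta|\mathbf{F}\Pi_{\overset{\cdot}{\beta}+k}\mathbf{W}
=\Pi_{\overset{\cdot}{\alpha}+k+1}^{\ast}\mathbf{F}|\alpha)A(\beta|\mathbf{F}\Pi_{\overset{\cdot}{\beta}+k+1}
=\Delta_{k+1}(A,\alpha,\beta)\in\mathfrak{X}_{o}.
\]
Linearity then gives $\mathbf{W}^{\ast}\mathfrak{X}_{o}\mathbf{W}\subset\mathfrak{X}_{o}$.

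For $\Sigma$ I would fix $\mathbf{T}\in\Sigma$ and check that $\Gamma(\alpha)(\mathbf{W}^{\ast}\mathbf{T}\mathbf{W})\Gamma(\beta)^{\ast}\in\mathfrak{A}$ for all strings $\alpha,\beta$ of $\mathfrak{A}$ with $\overset{\cdot}{\alpha},\overset{\cdot}{\beta}\geq 1$. By \ref{rel-1}, $\Pi_{\overset{\cdot}{\alpha}-1}\mathbf{W}^{\ast}=\Pi_{\overset{\cdot}{\alpha}-2}$ when $\overset{\cdot}{\alpha}\geq 2$ while $\Pi_{0}\mathbf{W}^{\ast}=0$, so the expression is $0\in\mathfrak{A}$ whenever $\overset{\cdot}{\alpha}=1$ or $\overset{\cdot}{\beta}=1$; thus I may assume $\overset{\cdot}{\alpha},\overset{\cdot}{\beta}\geq 2$. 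The key observation is the factorisation of the words defining bras and kets: every string bra with $\overset{\cdot}{\alpha}\geq 1$ can be written $(\alpha|=B\,\mathbf{V}^{\ast}(\gamma|$ with $B\in\mathfrak{A}$ and $\gamma$ a string of $\mathfrak{A}$ with $\overset{\cdot}{\gamma}=\overset{\cdot}{\alpha}-1$ — collect into $B$ the algebra factors standing to the left of the first power $\mathbf{V}^{n^{\ast}}$ ($n\geq 1$) in $(\alpha|$ and then write $\mathbf{V}^{n^{\ast}}=\mathbf{V}^{\ast}\mathbf{V}^{(n-1)^{\ast}}$; dually every ket with $\overset{\cdot}{\beta}\geq 1$ factors as $|\beta)=|\delta)\,\mathbf{V}\,B'$ with $B'\in\mathfrak{A}$ and $\overset{\cdot}{\delta}=\overset{\cdot}{\beta}-1$.

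Plugging these in and using $\overset{\cdot}{\gamma}-1=\overset{\cdot}{\alpha}-2$ and $\overset{\cdot}{\delta}-1=\overset{\cdot}{\beta}-2$, I obtain $\Gamma(\alpha)\mathbf{W}^{\ast}=(\alpha|\mathbf{F}\Pi_{\overset{\cdot}{\alpha}-2}=B\,\mathbf{V}^{\ast}\,\Gamma(\gamma)$ and, symmetrically, $\mathbf{W}\Gamma(\beta)^{\ast}=\Pi_{\overset{\cdot}{\beta}-2}^{\ast}\mathbf{F}|\beta)=\Gamma(\delta)^{\ast}\,\mathbf{V}\,B'$, so that
\[
\Gamma(\alpha)\bigl(\mathbf{W}^{\ast}\mathbf{T}\mathbf{W}\bigr)\Gamma(\beta)^{\ast}
=B\,\mathbf{V}^{\ast}\bigl(\Gamma(\gamma)\,\mathbf{T}\,\Gamma(\delta)^{\ast}\bigr)\mathbf{V}\,B'.
\]
Since $\overset{\cdot}{\gamma},\overset{\cdot}{\delta}\geq 1$, the operators $\Gamma(\gamma),\Gamma(\delta)$ are genuine gamma operators associated to $(\mathfrak{A},\mathbf{V})$, so $\mathbf{T}\in\Sigma$ gives $\Gamma(\gamma)\mathbf{T}\Gamma(\delta)^{\ast}\in\mathfrak{A}$; the hypothesis $\mathbf{V}^{\ast}\mathfrak{A}\mathbf{V}\subset\mathfrak{A}$ then yields $\mathbf{V}^{\ast}\bigl(\Gamma(\gamma)\mathbf{T}\Gamma(\delta)^{\ast}\bigr)\mathbf{V}\in\mathfrak{A}$, and left/right multiplication by $B,B'\in\mathfrak{A}$ keeps everything in $\mathfrak{A}$. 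Hence $\mathbf{W}^{\ast}\mathbf{T}\mathbf{W}\in\Sigma$, i.e. $\mathbf{W}^{\ast}\Sigma\mathbf{W}\subset\Sigma$.

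The step I expect to be the real obstacle is exactly the one just isolated. The naive attempt — rewriting $\Gamma(\alpha)\mathbf{W}^{\ast}$ itself as a gamma operator, or as a left $\mathfrak{A}$-combination of gamma operators — fails, because lowering the weight of a string unavoidably leaves a free $\mathbf{V}^{\ast}\notin\mathfrak{A}$. One must instead keep that $\mathbf{V}^{\ast}$, pair it with the symmetric $\mathbf{V}$ produced on the $\beta$-side, apply the defining property of $\Sigma$ to the reduced-weight gamma operators first, and only afterwards reabsorb the pair $\mathbf{V}^{\ast}(\,\cdot\,)\mathbf{V}$ through $\mathbf{V}^{\ast}\mathfrak{A}\mathbf{V}\subset\mathfrak{A}$.
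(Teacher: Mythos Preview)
Your argument is correct and follows essentially the same route as the paper for the $\Sigma$-part: factor $(\alpha|=B\,\mathbf{V}^{\ast}(\gamma|$ with $\overset{\cdot}{\gamma}=\overset{\cdot}{\alpha}-1$, recognise the reduced piece as a genuine gamma operator, apply the defining property of $\Sigma$, and then use $\mathbf{V}^{\ast}\mathfrak{A}\mathbf{V}\subset\mathfrak{A}$. Your treatment is in fact slightly more complete than the paper's: you handle the boundary case $\overset{\cdot}{\alpha}=1$ or $\overset{\cdot}{\beta}=1$ explicitly, and you supply the direct shift computation $\mathbf{W}^{\ast}\Delta_{k}(A,\alpha,\beta)\mathbf{W}=\Delta_{k+1}(A,\alpha,\beta)$ for the $\mathfrak{X}_{o}$-inclusion, which the paper omits.
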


\begin {proof}
Let be $\mathbf{T}$ belong to $\Sigma $, for each gamma operators 
$\Gamma\left( \alpha \right) $ 
and 
$\Gamma \left(\beta \right) $ 
we have:
\begin{eqnarray*}
\Gamma \left(\alpha \right) \left( \mathbf{W}^{\ast }\mathbf{TW}\right)
\Gamma \left(\beta \right)^{\ast} &=&\left( \alpha \right\vert \mathbf{
F\Pi}_{\overset{\cdot }{\alpha}-1}\mathbf{W}^{\ast }\mathbf{TW\Pi}_{
\overset{\cdot }{\beta }-1}\mathbf{F}\left\vert\beta \right) = \\
&=&\left( \alpha \right\vert \mathbf{F\mathbf{\Pi }_{\overset{\cdot }{\alpha 
}-2}T\mathbf{\Pi }_{\overset{\cdot }{\beta }-2}F}\left\vert \beta \right)
\in \mathfrak{A}\mathbf{V}^{\ast }\Gamma _{1}\left( \alpha _{o}\right) 
\mathbf{T}\Gamma _{2}\left( \beta _{o}\right) \mathbf{V}\mathfrak{A}\subset 
\mathbf{V}^{\ast}\mathfrak{A}\mathbf{V\subset }\mathfrak{A.}
\end{eqnarray*}
where 
$\alpha _{o}$ 
and 
$\beta _{o}$ 
are strings of 
$\mathfrak{A}$ 
with 
$\overset{.}{\alpha _{o}}=\overset{.}{\alpha }-1$ and $\overset{.}{\beta _{o}}%
=\overset{.}{\beta }-1$.
\newline
In fact let 
$\alpha =\left( m_{1},m_{2}....m_{r},A_{1},A_{2}...A_{r}\right) $
by definition of gamma operator, there is 
$i\leq r$ with $m_{i}\geq 1$ 
such that
\begin{equation*}
\left(\alpha \right\vert \mathbf{F\mathbf{\Pi }_{\overset{\cdot}{\alpha }-2}=}A_{1}\cdot \cdot \cdot \cdot A_{i}\mathbf{V}^{\ast }\mathbf{\left(
\alpha _{o}\right\vert \mathbf{F\mathbf{\Pi }_{\overset{\cdot }{\alpha }-2}=}%
}A_{1}\cdot \cdot \cdot \cdot A_{i}\mathbf{V}^{\ast }\Gamma \left( \alpha
_{o}\right) ,
\end{equation*}
where 
$\alpha _{o}=\left(0,..0,m_{i}-1,m_{i+1}..m_{r},A_{1},A_{2}...A_{r}\right) $ with 
$\overset{\cdot}{\alpha }_{o}=\overset{\cdot }{\alpha }-1.$
\end {proof}

Let $\mathfrak{X}$ be the closure in norm of the *-algebra 
$\mathfrak{X}_{o}$. 
Since $\Sigma $ is a norm closed set, we have 
$\mathfrak{X}\subset\Sigma $ 
while if 
$\mathfrak{A}$ 
is a von Neumann algebra of 
$\mathfrak{B}\left( \mathcal{H}\right) $ 
then  $\Sigma $ is weakly closed and 
$ \mathfrak{X}_{o}^{\prime\prime}\subset\Sigma .$

\begin{proposition}
The set 
\begin{equation}
\mathcal{S}=\left\{ \left\vert 
\begin{array}{cc}
A & \Gamma _{1} \\ 
\Gamma _{2}^{\ast } & \mathbf{T}%
\end{array}
\right\vert :A\in \mathfrak{A}, \ \mathbf{T}\in \mathfrak{X} \ and \ 
\Gamma _{i}\text{ are gamma op.of }\left(\mathfrak{A},\mathbf{V}\right)
\right\} ,  \label{eq-S-operator-systems}
\end{equation}
is an operator system of 
$\mathfrak{B}\left( \widehat{\mathcal{H}}\right) $ 
such that:
\begin{equation*}
\widehat{\mathbf{V}}^{\ast }\mathcal{S}\widehat{\mathbf{V}}\subset \mathcal{S}.
\end{equation*}
Furthermore
\begin{equation*}
\widehat{\mathbf{V}}^{\ast}C^{\ast }\left( \mathcal{S}\right) \widehat{%
\mathbf{V}}\subset C^{\ast }\left(\mathcal{S}\right) ,
\end{equation*}
where 
$C^{\ast}\left(\mathcal{S}\right)$ 
is the C*-algebra generated by the set 
$\mathcal{S}.$
\end{proposition}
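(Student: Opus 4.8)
The plan is to put $\widehat{\mathbf{V}}$ in the $2\times 2$ block form of Section~2, conjugate a generic element of $\mathcal{S}$, and check that each of the four resulting entries again sits in its prescribed slot; the claim about $C^{\ast}(\mathcal{S})$ will then drop out of the fact that conjugation by the \emph{unitary} $\widehat{\mathbf{V}}$ is a $\ast$-homomorphism. First, reading the off-diagonal corners of the matrices in \ref{eq-S-operator-systems} as ranging over the linear span of the gamma operators of $(\mathfrak{A},\mathbf{V})$, $\mathcal{S}$ is plainly a linear subspace; it is self-adjoint since $\mathfrak{A}$ and $\mathfrak{X}$ are $\ast$-algebras and
\[
\left\vert\begin{array}{cc} A & \Gamma_{1}\\ \Gamma_{2}^{\ast} & \mathbf{T}\end{array}\right\vert^{\ast}=\left\vert\begin{array}{cc} A^{\ast} & \Gamma_{2}\\ \Gamma_{1}^{\ast} & \mathbf{T}^{\ast}\end{array}\right\vert
\]
has the same shape, and it contains $\mathbf{I}_{\widehat{\mathcal{H}}}=\mathbf{I}_{\mathcal{H}}\oplus\mathbf{I}_{l^{2}(\mathbf{F}\mathcal{H})}$.

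For the invariance, with $\widehat{\mathbf{V}}^{\ast}=\left\vert\begin{array}{cc}\mathbf{V}^{\ast}&\mathbf{0}\\ \Pi_{0}^{\ast}\mathbf{F}&\mathbf{W}^{\ast}\end{array}\right\vert$ a direct multiplication gives, for $S=\left\vert\begin{array}{cc}A&\Gamma_{1}\\ \Gamma_{2}^{\ast}&\mathbf{T}\end{array}\right\vert\in\mathcal{S}$, the $(1,1)$ block $\mathbf{V}^{\ast}A\mathbf{V}$, the $(1,2)$ block $\mathbf{V}^{\ast}A\mathbf{F}\Pi_{0}+\mathbf{V}^{\ast}\Gamma_{1}\mathbf{W}$, the $(2,1)$ block $\Pi_{0}^{\ast}\mathbf{F}A\mathbf{V}+\mathbf{W}^{\ast}\Gamma_{2}^{\ast}\mathbf{V}$, and the $(2,2)$ block
\[
\Pi_{0}^{\ast}\mathbf{F}A\mathbf{F}\Pi_{0}+\Pi_{0}^{\ast}\mathbf{F}\Gamma_{1}\mathbf{W}+\mathbf{W}^{\ast}\Gamma_{2}^{\ast}\mathbf{F}\Pi_{0}+\mathbf{W}^{\ast}\mathbf{T}\mathbf{W}.
\]
The $(1,1)$ block lies in $\mathfrak{A}$ by the standing hypothesis $\mathbf{V}^{\ast}\mathfrak{A}\mathbf{V}\subset\mathfrak{A}$. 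For the $(1,2)$ block, $\mathbf{V}^{\ast}A\mathbf{F}\Pi_{0}$ is exactly the gamma operator $\Gamma(\alpha)$ of the string $\alpha=(1,A)$ (indeed $(\alpha|=\mathbf{V}^{\ast}A$ and $\overset{\cdot}{\alpha}-1=0$), whereas for a gamma operator $\Gamma(\beta)=(\beta|\mathbf{F}\Pi_{\overset{\cdot}{\beta}-1}$ the relation $\Pi_{n}\mathbf{W}=\Pi_{n+1}$ of \ref{rel-1} yields $\mathbf{V}^{\ast}\Gamma(\beta)\mathbf{W}=\mathbf{V}^{\ast}(\beta|\mathbf{F}\Pi_{\overset{\cdot}{\beta}}=\Gamma(\widetilde{\beta})$, with $\widetilde{\beta}$ obtained from $\beta$ by raising its innermost exponent by one; so by linearity the $(1,2)$ block is a linear combination of gamma operators, and the $(2,1)$ block is of the same type, its adjoint being $\mathbf{V}^{\ast}A^{\ast}\mathbf{F}\Pi_{0}+\mathbf{V}^{\ast}\Gamma_{2}\mathbf{W}$.

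The substantive step is the $(2,2)$ block. Writing $e$ for the weight-zero string with $|e)=(e|=\mathbf{I}$, I would recognise $\Pi_{0}^{\ast}\mathbf{F}A\mathbf{F}\Pi_{0}=\Delta_{0}(A,e,e)$ and, again by $\Pi_{\overset{\cdot}{\beta}-1}\mathbf{W}=\Pi_{\overset{\cdot}{\beta}}$, $\Pi_{0}^{\ast}\mathbf{F}\Gamma(\beta)\mathbf{W}=\Pi_{0}^{\ast}\mathbf{F}(\beta|\mathbf{F}\Pi_{\overset{\cdot}{\beta}}=\Delta_{0}(\mathbf{I},e,\beta)$; hence the first two summands lie in $\mathfrak{X}_{o}$, the third equals $(\Pi_{0}^{\ast}\mathbf{F}\Gamma_{2}\mathbf{W})^{\ast}$ and therefore also lies in $\mathfrak{X}_{o}$, and the last lies in $\mathfrak{X}$ because the inclusion $\mathbf{W}^{\ast}\mathfrak{X}_{o}\mathbf{W}\subset\mathfrak{X}_{o}$ of the Corollary above passes to the norm closure (since $\|\mathbf{W}\|\le 1$). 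Thus $\widehat{\mathbf{V}}^{\ast}S\widehat{\mathbf{V}}\in\mathcal{S}$.

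Finally, as $\widehat{\mathbf{V}}$ is unitary, $\Theta(X)=\widehat{\mathbf{V}}^{\ast}X\widehat{\mathbf{V}}$ is a $\ast$-homomorphism of $\mathfrak{B}(\widehat{\mathcal{H}})$, so $\mathcal{B}:=\Theta^{-1}(C^{\ast}(\mathcal{S}))$ is a C*-algebra; by the previous step $\Theta(\mathcal{S})\subset\mathcal{S}\subset C^{\ast}(\mathcal{S})$, i.e. $\mathcal{S}\subset\mathcal{B}$, whence $C^{\ast}(\mathcal{S})\subset\mathcal{B}$ — which is exactly $\widehat{\mathbf{V}}^{\ast}C^{\ast}(\mathcal{S})\widehat{\mathbf{V}}\subset C^{\ast}(\mathcal{S})$. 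I expect the only delicate point to be the $(2,2)$ bookkeeping: matching each summand to a napla operator with the correct indices and keeping the shifts in $\Pi_{n}\mathbf{W}^{m}=\Pi_{n+m}$ straight; once the block form of $\widehat{\mathbf{V}}$ is written down, the rest is essentially forced.
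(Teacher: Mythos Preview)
Your proof is correct and follows essentially the same approach as the paper: write $\widehat{\mathbf{V}}^{\ast}S\widehat{\mathbf{V}}$ in $2\times 2$ block form, identify $\mathbf{V}^{\ast}A\mathbf{F}\Pi_{0}$ and $\mathbf{V}^{\ast}\Gamma(\beta)\mathbf{W}$ as gamma operators (the paper uses the same strings $(1,A)$ and the weight-shifted $\beta$), and recognise the $(2,2)$ summands as the napla operators $\Delta_{0}(A,e,e)$, $\Delta_{0}(\mathbf{I},e,\beta)$ together with $\mathbf{W}^{\ast}\mathbf{T}\mathbf{W}\in\mathfrak{X}$ from the $\mathbf{W}$-invariance corollary. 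The paper in fact asserts the inclusion $\widehat{\mathbf{V}}^{\ast}C^{\ast}(\mathcal{S})\widehat{\mathbf{V}}\subset C^{\ast}(\mathcal{S})$ without argument, so your clean deduction via $\Theta^{-1}(C^{\ast}(\mathcal{S}))$ being a C*-algebra containing $\mathcal{S}$ is a welcome addition.
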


\begin {proof}
We obtain:
\begin{equation*}
\widehat{\mathbf{V}}^{\ast }\mathcal{S}\widehat{\mathbf{V}}=\left\vert 
\begin{array}{cc}
\mathbf{V}^{\ast }A\mathbf{V} & \mathbf{V}^{\ast }A\mathbf{C}\left( 1\right)
+\mathbf{V}^{\ast }\Gamma _{1}\mathbf{W} \\ 
\mathbf{C}\left( 1\right) ^{\ast }A\mathbf{V}+\mathbf{W}^{\ast }\Gamma
_{2}^{\ast }\mathbf{V} & \mathbf{C}\left( 1\right) ^{\ast }A\mathbf{C}\left(
1\right) +\mathbf{W}^{\ast }\Gamma _{2}^{\ast }\mathbf{C}\left( 1\right)+
\mathbf{C}\left( 1\right) ^{\ast }\Gamma _{1}\mathbf{W}+\mathbf{W}^{\mathbf{\ast}}\mathbf{TW}
\end{array}
\right\vert ,
\end{equation*}
where the operators 
$\mathbf{V}^{\ast}\Gamma \left( \alpha \right) \mathbf{W}$ 
and 
$\mathbf{V}^{\ast}A\mathbf{C}\left( 1\right) $ 
are gamma operators associated to pair 
$\left(\mathfrak{A}\mathbf{,V}\right) $, 
while  
$ \mathbf{C}\left(1\right)^{\ast}A\mathbf{C}\left(1\right)$, 
$\mathbf{C}\left(1\right)^{\ast}\Gamma \left(\alpha \right)\mathbf{W,}$ 
and 
$\mathbf{W}^{\mathbf{\ast}}T\mathbf{W}$ 
are operators belonging to 
$\mathfrak{X}$.
\newline
In fact we have the following relationships:
\begin{equation*}
\mathbf{V}^{\ast }A\mathbf{C}\left( 1\right) =\mathbf{V}^{\ast }A\mathbf{%
F\Pi }_{0}=\Gamma \left( \vartheta \right) \text{\ \ with}~\vartheta =\left(
1,A\right) .
\end{equation*}
while if 
$\alpha =\left( m_{1},m_{2}..m_{r},A_{1},A_{2}...A_{r}\right) $ 
we obtain:
\begin{equation*}
\mathbf{V}^{\ast }\Gamma \left(\alpha \right) \mathbf{W=V}^{\ast }\left(
\alpha \right\vert \mathbf{F\Pi }_{\overset{\cdot }{\alpha }-1}\mathbf{W=}%
\Gamma \left( \vartheta \right) ,
\end{equation*}
with 
$\vartheta =\left(m_{1}+1,m_{2}..m_{r},A_{1},A_{2}...A_{r}\right)$
since 
$\mathbf{\Pi}_{\overset{\cdot }{\alpha}-1}\mathbf{W=\Pi}_{\overset{\cdot}{\alpha}}$.
\newline
Furthermore
\begin{equation*}
\mathbf{C}\left( 1\right) ^{\ast }A\mathbf{C}\left( 1\right) =\mathbf{\Pi }%
_{0}^{^{\ast }}\mathbf{F}A\mathbf{F\Pi }_{0}=\Delta _{0}\left( A,\alpha
,\beta \right) \text{ \ with }\alpha =\beta =\left( 0,\mathbf{I}\right) 
\end{equation*}
while
\begin{equation*}
\mathbf{C}\left( 1\right) ^{\ast }\Gamma \left( \alpha \right) \mathbf{W=\Pi 
}_{0}^{^{\ast }}\mathbf{F}\left( \alpha \right\vert \mathbf{F}\Pi _{\overset{%
\cdot }{\alpha }-1}\mathbf{W=\Pi }_{0}^{^{\ast }}\mathbf{F}\left\vert \gamma
\right) \left( \alpha \right\vert \mathbf{F}\Pi _{\overset{\cdot }{\alpha }%
+0}=\Delta _{0}\left( \mathbf{I},\gamma ,\alpha \right) \text{ with }\gamma
=\left( 0,\mathbf{I}\right) \mathbf{.}
\end{equation*}
\end {proof}
We observe that the *-algebra 
$\mathcal{A}^{\ast}\left(\mathcal{S}\right) $ 
generated by the operator system $\mathcal{S}$ is given by 
\begin{equation}
\mathcal{A}^{\ast }\left( \mathcal{S}\right) =\left\vert 
\begin{array}{cc}
\mathfrak{A} & \mathfrak{A}\Gamma \mathfrak{X} \\ 
\mathfrak{X}\Gamma ^{\ast }\mathfrak{A} & \mathfrak{X}
\end{array}
\right\vert .  \label{S-Algebra}
\end{equation}

Now we can easily prove proposition \ref{prop-invariantalgebra}.

\begin {proof}
We get 
$C^{\ast}\left(\mathcal{S}\right) $, 
the C*-algebra generated by 
$\mathcal{S}$ 
defined in \ref{eq-S-operator-systems}, 
by the definition 
$\mathbf{Z}\mathfrak{A}\mathbf{Z}^{\ast }\subset \mathcal{S}$ 
then
\begin{equation*}
\mathbf{Z}^{\ast }C^{\ast}\left(\mathcal{S}\right) \mathbf{Z}\subset 
\mathfrak{A}\text{.}
\end{equation*}
Moreover for $X\in C^*(\mathcal{S})$ we have:
\begin{equation*}
\mathbf{Z}^*\widehat{\mathbf{V}}^*X\widehat{\mathbf{V}}\mathbf{Z=
\mathbf{V}Z}^*X\mathbf{ZV}, 
\end{equation*}%
since $\widehat{\mathbf{V}}\mathbf{Z=ZV} $.
\newline
Let be $\mathfrak{F}$ the family of C*-subalgebras 
$\widehat{\mathfrak{B}}$
with unit of 
$C^*(\mathcal{S}) $ such that $\mathbf{Z}\mathfrak{A}\mathbf{Z}^*\subset \widehat{\mathfrak{B}}$ and $\widehat{\mathbf{V}}^*\widehat{\mathfrak{B}}\widehat{\mathbf{V}}\subset \widehat{\mathfrak{B}}$.
The family $\mathfrak{F}$ with inclusion is partially ordered set, then for
Zorn lemma's exists a minimal element that we shall denote with 
$\widehat{\mathfrak{A}}.$
\end {proof}

\section{Stinespring's theorem and dilations}
We examine a concrete C*-algebra 
$\mathfrak{A}$ 
of 
$\mathcal{B}(\mathcal{H})$ 
with unit and an ucp-map 
$\Phi:\mathfrak{A}\rightarrow\mathfrak{A}.$ 
By the Stinespring theorem for the ucp-map $\Phi$, we can deduce a triple $(\mathbf{V}_{\Phi},\sigma_{\Phi},\mathcal{L}_{\Phi})$ 
constituted by a Hilbert space 
$\mathcal{L}_{\Phi}$, 
a representation 
$\sigma_{\Phi}:\mathfrak{A}\rightarrow\mathcal{B}(\mathcal{L}_{\Phi})  $ 
and a linear contraction $\mathbf{V}_{\Phi}:\mathcal{H}\rightarrow\mathcal{L}_{\Phi}$ 
such thata for $\in\mathfrak{A}$,
\begin{equation}
\Phi(a)=\mathbf{V}_{\Phi}^*\sigma_{\Phi}(a)\mathbf{V}_{\Phi}.
\label{stine}
\end{equation}
We recall that on the algebraic tensor 
$\mathfrak{A}\otimes\mathcal{H}$ 
we can define a semi-inner product by
\[
\left\langle a_{1}\otimes\Psi_{1},a_{2}\otimes\Psi_{2}\right\rangle_{\Phi}=\left\langle \Psi_{1},\Phi\left( a_{1}^{\ast}a_{2}\right)\Psi_{2}\right\rangle_{\mathcal{H}},
\]
for all 
$a_{1},a_{2}\in\mathfrak{A}$ 
and 
$\Psi_{1},\Psi_{2}\in\mathcal{H}$
furthermore the Hilbert space 
$\mathcal{L}_{\Phi}$ 
is the completion of the quotient space 
$\mathfrak{A}\overline{\otimes}_{\Phi}\mathcal{H}$
 of 
$\mathfrak{A}\otimes\mathcal{H}$ 
by the linear subspace
\[
\left\{X\in\mathfrak{A}\otimes\mathcal{H}:\left\langle X,X\right\rangle
_{\Phi}=0\right\}
\]
with inner product induced by 
$\left\langle \cdot,\cdot\right\rangle _{\Phi}$. 
We shall denote the image at 
$a\otimes\Psi\in\mathfrak{A}\otimes\mathcal{H}$ 
in 
$\mathfrak{A}\overline{\otimes}_{\Phi}\mathcal{H}$ 
by 
$a\overline{\otimes}_{\Phi}\Psi,$ 
so that we have
\[
\left\langle a_{1}\overline{\otimes}_{\Phi}\Psi_{2},a_{2}\overline{\otimes
}_{\Phi}\Psi_{2}\right\rangle _{\mathcal{L}_{\Phi}}=\left\langle \Psi_{1}
,\Phi\left(a_{1}^{\ast}a_{2}\right)\Psi_{2}\right\rangle _{\mathcal{H}},
\]
for all $a_{1},a_{2}\in\mathfrak{A}$ and $\Psi_{1},\Psi_{2}
\in\mathcal{H}$.
\newline
Moreover
$\sigma_{\Phi}\left(a\right)\left(x\overline{\otimes}_{\Phi}\Psi\right)  =ax\otimes_{\Phi}\Psi,$ 
for each 
$x\overline{\otimes}_{\Phi}\Psi\in\mathcal{L}_{\Phi}$ 
and 
$\mathbf{V}_{\Phi}\Psi=\mathbf{1}\overline{\otimes}_{\Phi}\Psi$ 
for each 
$\Psi\in\mathcal{H}.
$\newline 
Since $\Phi$ is unital map, the linear operator 
$\mathbf{V}_{\Phi}$ 
is an isometry with adjoint 
$\mathbf{V}_{\Phi}^{\ast}$ 
defined by
\[
\mathbf{V}_{\Phi}^*a\overline{\otimes}_{\Phi}\Psi=\Phi(a)\Psi,
\] 
for all $a\in\mathfrak{A}$ and $\Psi\in\mathcal{H}$.
\newline
We recall that the multiplicative domain of the ucp-map
$\Phi:\mathfrak{A}\rightarrow\mathfrak{A}$ 
is the C*-subalgebra of $\mathfrak{A}$ such defined:
\[
\mathcal{D}_{\Phi}=\{a\in\mathfrak{A}:\Phi(a^*)
\Phi(a)=\Phi(a^*a)\ \text{and}\ \Phi(
a)\Phi(a^*)=\Phi(aa^*)\},
\]
we have the following implications (See Paulsen Ref.\cite{pau 86}):
\newline  
$ a\in\mathcal{D}_{\Phi}$  
if and only if 
$\Phi(a)\Phi(x)=\Phi(ax)$ 
and  
$\Phi(x)\Phi(a)=\Phi(xa)$ 
for all
$x\in\mathfrak{A}$.

\begin{proposition}
The ucp-map $\Phi$ is a multiplicative if and only if $\mathbf{V}_{\Phi}$ is
an unitary. Moreover if 
$x\in\mathcal{D}\left(\Phi\right)$ 
we have:
\[
\sigma_{\Phi}\left(x\right)\mathbf{V}_{\Phi}\mathbf{V}_{\Phi}^{\ast}=
\mathbf{V}_{\Phi}\mathbf{V}_{\Phi}^{\ast}\sigma_{\Phi}\left(x\right)  .
\]

\end{proposition}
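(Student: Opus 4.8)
The plan is to exploit the concrete description of $\sigma_\Phi$, $\mathbf V_\Phi$ and $\mathbf V_\Phi^*$ on the Stinespring space $\mathcal L_\Phi = \mathfrak A\,\overline\otimes_\Phi\,\mathcal H$ that has just been recorded. First I would settle the equivalence ``$\Phi$ multiplicative $\iff$ $\mathbf V_\Phi$ unitary''. Since $\mathbf V_\Phi$ is already an isometry (here $\Phi$ is unital), unitarity is equivalent to $\mathbf V_\Phi\mathbf V_\Phi^* = \mathbf I_{\mathcal L_\Phi}$. Using $\mathbf V_\Phi^*(a\,\overline\otimes_\Phi\Psi) = \Phi(a)\Psi$ and $\mathbf V_\Phi\Psi = \mathbf 1\,\overline\otimes_\Phi\Psi$, one computes $\mathbf V_\Phi\mathbf V_\Phi^*(a\,\overline\otimes_\Phi\Psi) = \mathbf 1\,\overline\otimes_\Phi\Phi(a)\Psi$, so $\mathbf V_\Phi\mathbf V_\Phi^* = \mathbf I$ says exactly that $a\,\overline\otimes_\Phi\Psi = \mathbf 1\,\overline\otimes_\Phi\Phi(a)\Psi$ for all $a,\Psi$. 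Testing this vector identity against $b\,\overline\otimes_\Phi\Xi$ and using the inner product formula turns it into $\langle\Psi,\Phi(a^*b)\Xi\rangle = \langle\Phi(a)\Psi,\Phi(b)\Xi\rangle = \langle\Psi,\Phi(a)^*\Phi(b)\Xi\rangle$ for all $a,b\in\mathfrak A$ and $\Psi,\Xi\in\mathcal H$, i.e. $\Phi(a^*b) = \Phi(a)^*\Phi(b)$ for all $a,b$, which (polarizing / taking $a=b$) is precisely multiplicativity of $\Phi$. Conversely, if $\Phi$ is multiplicative the same computation runs backwards.

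For the second assertion, fix $x\in\mathcal D(\Phi)$. I would prove the operator identity $\sigma_\Phi(x)\mathbf V_\Phi\mathbf V_\Phi^* = \mathbf V_\Phi\mathbf V_\Phi^*\sigma_\Phi(x)$ by evaluating both sides on a generating vector $a\,\overline\otimes_\Phi\Psi\in\mathcal L_\Phi$. The right-hand side gives $\mathbf V_\Phi\mathbf V_\Phi^*\sigma_\Phi(x)(a\,\overline\otimes_\Phi\Psi) = \mathbf V_\Phi\mathbf V_\Phi^*(xa\,\overline\otimes_\Phi\Psi) = \mathbf 1\,\overline\otimes_\Phi\Phi(xa)\Psi$, while the left-hand side gives $\sigma_\Phi(x)\mathbf V_\Phi\mathbf V_\Phi^*(a\,\overline\otimes_\Phi\Psi) = \sigma_\Phi(x)(\mathbf 1\,\overline\otimes_\Phi\Phi(a)\Psi) = x\,\overline\otimes_\Phi\Phi(a)\Psi$. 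So the claim reduces to the vector identity $x\,\overline\otimes_\Phi\Phi(a)\Psi = \mathbf 1\,\overline\otimes_\Phi\Phi(xa)\Psi$ in $\mathcal L_\Phi$. To verify it, pair against an arbitrary $b\,\overline\otimes_\Phi\Xi$: the left side yields $\langle\Phi(a)\Psi,\Phi(x^*b)\Xi\rangle$ and the right side yields $\langle\Phi(xa)\Psi,\Phi(b)\Xi\rangle$. Now invoke the multiplicative-domain characterization quoted just above the proposition: $x\in\mathcal D(\Phi)$ gives $\Phi(x^*b) = \Phi(x^*)\Phi(b) = \Phi(x)^*\Phi(b)$ and $\Phi(xa) = \Phi(x)\Phi(a)$, so both inner products equal $\langle\Phi(x)\Phi(a)\Psi,\Phi(b)\Xi\rangle$, and the two vectors coincide.

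The only genuine subtlety — the step I expect to be the main obstacle — is the passage between the vector equation in $\mathcal L_\Phi$ and the scalar equations obtained by pairing: one must be careful that it suffices to test against simple tensors $b\,\overline\otimes_\Phi\Xi$, which is legitimate because their linear span is dense in $\mathcal L_\Phi$, and that the two displayed operators are bounded (indeed $\sigma_\Phi(x)$ and $\mathbf V_\Phi\mathbf V_\Phi^*$ are bounded, the latter a projection), so an identity holding on a dense set of vectors holds everywhere. Everything else is the bookkeeping of the Stinespring GNS-type inner product, together with one application each of the identity ``$\mathbf 1\otimes$'' intertwining formula for $\mathbf V_\Phi\mathbf V_\Phi^*$ and of the Paulsen characterization of $\mathcal D(\Phi)$.
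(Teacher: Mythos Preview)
Your proposal is correct and follows essentially the same route as the paper: compute $\mathbf V_\Phi\mathbf V_\Phi^*(a\,\overline\otimes_\Phi\Psi)=\mathbf 1\,\overline\otimes_\Phi\Phi(a)\Psi$ and reduce both assertions to the vector identity $a\,\overline\otimes_\Phi\Psi=\mathbf 1\,\overline\otimes_\Phi\Phi(a)\Psi$. The only cosmetic difference is that the paper verifies this identity by computing the norm of the difference, obtaining $\langle\Psi,(\Phi(a^*a)-\Phi(a^*)\Phi(a))\Psi\rangle$ directly, whereas you test against a general simple tensor $b\,\overline\otimes_\Phi\Xi$; both arguments are standard and interchangeable.
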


\begin {proof}
For each $\Psi\in\mathcal{H}$ we obtain the following implications:
\[
a\overline{\otimes}_{\Phi}\Psi=\mathbf{1}\overline{\otimes}_{\Phi}\Phi\left(
a\right)\Psi\ \ \ \Longleftrightarrow\ \ \ \Phi\left(a^{\ast}a\right)
=\Phi\left( a^{\ast}\right)\Phi\left(a\right)  ,
\]
since
\[
\left\Vert a\overline{\otimes}_{\Phi}\Psi-1\overline{\otimes}_{\Phi}
\Phi\left(a\right)\Psi\right\Vert =\left\langle \Psi,\Phi\left(  a^{\ast
}a\right)\Psi\right\rangle -\left\langle \Psi,\Phi\left( a^{\ast}\right)
\Phi\left(a\right)\Psi\right\rangle .
\]
Furthermore, for each 
$a\in\mathfrak{A}$ 
and 
$\Psi\in\mathcal{H}$ 
we have
$\mathbf{V}_{\Phi}\mathbf{V}_{\Phi}^{\ast}a\overline{\otimes}_{\Phi}
\Psi=\mathbf{1}\overline{\otimes}_{\Phi}\Phi\left(a\right)\Psi.$
\end {proof}

Now we prove the following Stinespring-type theorem (See Zsido
Ref.\cite{Zsido}):

\begin{proposition}
Let $\mathfrak{A}$ be a concrete C*-subalgebra with unit of 
$\mathfrak{\mathcal{B}}\left(\mathcal{H}\right)$ 
and
$\Phi:\mathfrak{A}\rightarrow\mathfrak{A}$ 
an ucp-map, then there exists a faithful representation 
$\left(\pi_{\infty},\mathcal{H}_{\infty}\right)$
of 
$\mathfrak{A}$ 
and an isometry 
$\mathbf{V}_{\infty}$ 
on Hilbert Space
$\mathcal{H}_{\infty}$ 
such that for $ a\in\mathfrak{A}$,
\begin{equation}
\mathbf{V}_{\infty}^{\ast}\mathcal{\pi}_{\infty}\left(a\right)
\mathbf{V}_{\infty}=\mathcal{\pi}_{\infty}\left( \Phi\left(a\right)
\right), \label{stinespring dil}
\end{equation}
where
\[
\sigma_{0}=id,\text{ \ \ \ }\Phi_{n}=\sigma_{n}\circ\Phi\text{ }%
\]
and 
$\left(\mathbf{V}_{n},\sigma_{n+1},\mathcal{H}_{n+1}\right)$ 
is the Stinespring dilation of 
$\Phi_{n}$ 
for every 
$n\geq0,$
\begin{equation}
\mathcal{H}_{\infty}=\bigoplus_{j=0}^{\infty}\mathcal{H}_{j},\ \ \ \ \ \mathcal{H}_{j}=\mathfrak{A}\overline{\otimes}
_{\Phi_{j-1}}\mathcal{H}_{j-1},\ \ \text{\ for }j\geq1\text{ and }
\mathcal{H}_{0}=\mathcal{H}; \label{Hilbert space infinit}
\end{equation}
and
\[
\mathbf{V}_{\infty}(\Psi_{0},\Psi_{1},\Psi_{2},...)=(0,\mathbf{V}_{0}\Psi
_{0},\ \mathbf{V}_{1}\Psi_{1},...)
\]
for each 
$(\Psi_{0},\Psi_{1},\Psi_{2},...)\in\mathcal{H}_{\infty}.$
\newline
Furthermore the map $\Phi$ is a homomorphism if and only if 
$\mathbf{V}_{\infty}\mathbf{V}_{\infty}^{\ast}\in\mathcal{\pi}_{\infty}\left(
\mathfrak{A}\right)^{^{\prime}}$.
\label{prop-stinespring}
\end{proposition}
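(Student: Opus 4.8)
The plan is to build an infinite "tower" of Stinespring spaces and verify that the obvious shift-type isometry $\mathbf{V}_\infty$ intertwines the diagonal representation in the required way. First I would set up the construction carefully: start with $\mathcal{H}_0 = \mathcal{H}$ and $\sigma_0 = \mathrm{id}$ (the given faithful representation of $\mathfrak{A}$ on $\mathcal{H}$), and inductively, having produced $\mathcal{H}_n$ and $\sigma_n : \mathfrak{A} \to \mathcal{B}(\mathcal{H}_n)$, form $\Phi_n = \sigma_n \circ \Phi$, which is a ucp-map from $\mathfrak{A}$ into $\mathcal{B}(\mathcal{H}_n)$ (no longer into $\mathfrak{A}$, but Stinespring applies to any ucp-map into a $B(\mathcal{K})$). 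Its Stinespring dilation $(\mathbf{V}_n, \sigma_{n+1}, \mathcal{H}_{n+1})$ is then defined exactly as recalled before equation~\ref{stine}: $\mathcal{H}_{n+1} = \mathfrak{A}\,\overline{\otimes}_{\Phi_n}\mathcal{H}_n$, with $\sigma_{n+1}(a)(x \overline{\otimes}_{\Phi_n}\Psi) = ax \overline{\otimes}_{\Phi_n}\Psi$ and $\mathbf{V}_n \Psi = \mathbf{1}\overline{\otimes}_{\Phi_n}\Psi$, so that $\Phi_n(a) = \mathbf{V}_n^* \sigma_{n+1}(a)\mathbf{V}_n$, i.e. $\sigma_n(\Phi(a)) = \mathbf{V}_n^*\sigma_{n+1}(a)\mathbf{V}_n$. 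I would then set $\mathcal{H}_\infty = \bigoplus_{j\ge 0}\mathcal{H}_j$ and $\pi_\infty = \bigoplus_{j\ge 0}\sigma_j$, which is a representation of $\mathfrak{A}$ on $\mathcal{H}_\infty$; it is faithful because its first summand $\sigma_0 = \mathrm{id}$ already is. The operator $\mathbf{V}_\infty$ sends $(\Psi_0, \Psi_1, \dots)$ to $(0, \mathbf{V}_0\Psi_0, \mathbf{V}_1\Psi_1, \dots)$; since each $\mathbf{V}_n$ is an isometry (as $\Phi$, hence $\Phi_n$, is unital) and the summands are orthogonal, $\mathbf{V}_\infty$ is an isometry on $\mathcal{H}_\infty$.

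\textbf{The identity \ref{stinespring dil}.} Next I would compute $\mathbf{V}_\infty^* \pi_\infty(a)\mathbf{V}_\infty$ summand by summand. Since $\pi_\infty(a)$ is block-diagonal and $\mathbf{V}_\infty$ shifts the $n$-th summand into the $(n+1)$-st via $\mathbf{V}_n$, we get $\pi_\infty(a)\mathbf{V}_\infty (\Psi_0,\Psi_1,\dots) = (0, \sigma_1(a)\mathbf{V}_0\Psi_0, \sigma_2(a)\mathbf{V}_1\Psi_1, \dots)$, and then $\mathbf{V}_\infty^*$ pulls the $(n+1)$-st component back to the $n$-th component via $\mathbf{V}_n^*$, giving $n$-th component $\mathbf{V}_n^*\sigma_{n+1}(a)\mathbf{V}_n\Psi_n = \sigma_n(\Phi(a))\Psi_n$. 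Hence $\mathbf{V}_\infty^*\pi_\infty(a)\mathbf{V}_\infty = \bigoplus_n \sigma_n(\Phi(a)) = \pi_\infty(\Phi(a))$, which is exactly \ref{stinespring dil}. The only point requiring a word of care is that $\mathbf{V}_\infty^*$ acting on $(0, \eta_1, \eta_2, \dots)$ returns $(\mathbf{V}_0^*\eta_1, \mathbf{V}_1^*\eta_2, \dots)$; this follows directly from the block form of $\mathbf{V}_\infty$ and orthogonality of the summands.

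\textbf{The multiplicativity criterion.} For the last assertion I would argue as in the preceding proposition, applied at each level, together with the observation that $\mathbf{V}_\infty\mathbf{V}_\infty^*$ is the block-diagonal operator whose $(n+1)$-st diagonal block is $\mathbf{V}_n\mathbf{V}_n^*$ and whose $0$-th block is $0$. Now $\mathbf{V}_\infty\mathbf{V}_\infty^* \in \pi_\infty(\mathfrak{A})'$ iff each block $\mathbf{V}_n\mathbf{V}_n^*$ commutes with $\sigma_{n+1}(\mathfrak{A})$ (the $0$-th block commutes trivially). By the computation in the previous proposition, $\mathbf{V}_n\mathbf{V}_n^*\, a\overline{\otimes}_{\Phi_n}\Psi = \mathbf{1}\overline{\otimes}_{\Phi_n}\Phi_n(a)\Psi$, and one checks that $\mathbf{V}_n\mathbf{V}_n^*$ commutes with $\sigma_{n+1}(a)$ for all $a$ precisely when $\Phi_n$ — equivalently $\Phi$, since $\sigma_n$ is a $*$-homomorphism — is multiplicative; indeed $\sigma_{n+1}(x)\mathbf{V}_n\mathbf{V}_n^*(a\overline{\otimes}_{\Phi_n}\Psi) = x\overline{\otimes}_{\Phi_n}\Phi_n(a)\Psi$ while $\mathbf{V}_n\mathbf{V}_n^*\sigma_{n+1}(x)(a\overline{\otimes}_{\Phi_n}\Psi) = \mathbf{1}\overline{\otimes}_{\Phi_n}\Phi_n(xa)\Psi$, and equating these for all $x,a,\Psi$ forces $\Phi_n(xa) = \Phi_n(x)\Phi_n(a)$ (take $a=\mathbf 1$ to see one needs $x\overline\otimes_{\Phi_n}\Psi = \mathbf 1\overline\otimes_{\Phi_n}\Phi_n(x)\Psi$, which is the multiplicativity condition from the previous proof). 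Conversely if $\Phi$ is a homomorphism, each $\mathbf{V}_n$ is unitary by the previous proposition, so $\mathbf{V}_n\mathbf{V}_n^* = \mathbf{1}$ commutes with everything.

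\textbf{The main obstacle} I anticipate is purely bookkeeping: keeping straight that $\Phi_n$ takes values in $\mathcal{B}(\mathcal{H}_n)$ rather than in $\mathfrak{A}$, so that the "Stinespring dilation of $\Phi_n$" is the general-codomain version of the theorem, and being careful that $\sigma_n \circ \Phi$ is genuinely completely positive and unital (immediate, since $\sigma_n$ is a unital $*$-homomorphism). Beyond that, the block-matrix/infinite-direct-sum manipulations for $\mathbf{V}_\infty$ and $\mathbf{V}_\infty^*$ must be written with the index shift handled correctly, but no real difficulty arises.
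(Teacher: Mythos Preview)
Your proposal is correct and follows essentially the same route as the paper: the same inductive tower of Stinespring dilations, the same diagonal representation $\pi_\infty=\bigoplus\sigma_n$, the same shift isometry $\mathbf{V}_\infty$, and the same summand-by-summand verification of \eqref{stinespring dil}; for the multiplicativity criterion the paper likewise reduces to the previous proposition via the block structure of $\mathbf{V}_\infty\mathbf{V}_\infty^*$. Your write-up is in fact a bit more explicit (you spell out why $\pi_\infty$ is faithful and why $\mathbf{V}_\infty$ is an isometry), and the only cosmetic wrinkle is the remark ``equivalently $\Phi$, since $\sigma_n$ is a $*$-homomorphism'': for the converse direction you really only need the $n=0$ block, where $\sigma_0=\mathrm{id}$ makes the equivalence immediate.
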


\begin {proof}
By the Stinespring theorem there is triple 
$(\mathbf{V}_{0},\sigma_{1},\mathcal{H}_{1})$ 
such that for each 
$a\in\mathfrak{A}$ 
we have
$\Phi(a)=\mathbf{V}_{0}^*\sigma_{1}(a)\mathbf{V}_{0}$. 
The application 
$a\in\mathfrak{A}\rightarrow\sigma_{1}(\Phi(a))\in\mathfrak{\mathcal{B}}(\mathcal{H}_{1})$ 
is a composition of cp-maps therefore it is also a cp map. Set
$\Phi_{1}(a)=\sigma_{1}(\Phi(a))$. 
By appling the Stinespring's theorem to 
$\Phi_{1}$, 
we have a new triple
$(\mathbf{V}_{1},\sigma_{2},\mathcal{H}_{2})$ 
such that
$\Phi_{1}(a)=\mathbf{V}_{1}^*\sigma_{2}(a)\mathbf{V}_{1}$. 
By induction for $n\geq1$ we define 
$\Phi_{n}(a)=\sigma_{n}(\Phi(a))$ 
and we have a triple 
$(\mathbf{V}_{n},\sigma_{n+1},\mathcal{H}_{n+1})$ 
such that
$\mathbf{V}_{n}:\mathcal{H}_{n}\rightarrow\mathcal{H}_{n+1}$ 
and 
$\Phi_{n}(a)=\mathbf{V}_{n}^*\sigma_{n+1}(a)
\mathbf{V}_{n}$. 
\newline We get the Hilbert space $\mathcal{H}_{\infty}$
defined in \ref{Hilbert space infinit} and the injective representation of
the C*-algebra 
$\mathfrak{A}$ 
on 
$\mathcal{H}_{\infty}$ :
\begin{equation}
\pi_{\infty}(a)=\bigoplus \limits_{n\geq0}
\sigma_{n}(a)\label{rap infinito}
\end{equation}
with $\mathcal{\sigma}_{0}(a)=a$,  for each $a\in\mathfrak{A}$.
\newline 
Let $\mathbf{V}_{\infty}:\mathcal{H}_{\infty}\rightarrow
\mathcal{H}_{\infty} $ 
be the isometry defined by
\begin{equation}
\mathbf{V}_{\infty}(\Psi_{0},\Psi_{1}....\Psi_{n}...)=(
0,\mathbf{V}_{0}\Psi_{0},\mathbf{V}_{1}\Psi_{1}....\mathbf{V}_{n}\Psi
_{n}...),
\label{isometria inf}%
\end{equation}
for all $\Psi_{i}\in\mathcal{H}_{i}$ with $i\in\mathbb{N}$.
\newline
The adjoint operator of 
$\mathbf{V}_{\infty}$ 
is
\begin{equation}
\mathbf{V}_{\infty}^{\ast}(\Psi_{0},\Psi_{1},....\Psi_{n}...)
=(\mathbf{V}_{0}^{\ast}\Psi_{1},\mathbf{V}_{1}^{\ast}\Psi
_{2}....\mathbf{V}_{n-1}^{\ast}\Psi_{n}...)  \label{isometria inf 2}
\end{equation}
for all $\Psi_{i}\in\mathcal{H}_{i}$ with $i\in\mathbb{N}$, therefore\begin{align*}
\mathbf{V}_{\infty}^{\ast}\pi_{\infty}\left(  a\right)  \mathbf{V}_{\infty}%
{\textstyle\bigoplus\limits_{_{n\geq0}}}
\Psi_{n}  &  =%
{\textstyle\bigoplus\limits_{_{n\geq0}}}
\mathbf{V}_{n}^{\ast}\mathcal{\sigma}_{n+1}\left(  a\right)  \mathbf{V}%
_{n}\Psi_{n}=%
{\textstyle\bigoplus\limits_{_{n\geq0}}}
\Phi_{n}\left(  a\right)  \Psi_{n}=\\
&  =%
{\textstyle\bigoplus\limits_{_{n\geq0}}}
\sigma_{n}\left( \Phi\left(a\right)  \right)  \Psi_{n}=\pi_{\infty}\left(
\Phi\left(  a\right)  \right)
{\textstyle\bigoplus\limits_{_{n\geq0}}}
\Psi_{n}.
\end{align*}
We notice that  
$\mathbf{E}_{n}=\mathbf{V}_{n}\mathbf{V}_{n}^{\ast}$ 
be the orthogonal projection of 
$\mathcal{B}\left(\mathcal{H}_{n-1}\right)$, 
we have:
\[
\mathbf{E}\left(  \Psi_{0},\Psi_{1}...\Psi_{n}..\right)  =\left(
0,\mathbf{E}_{0}\Psi_{1},\mathbf{E}_{1}\Psi_{2},...\mathbf{E}_{n}\Psi
_{n+1}...\right)  .
\]
Finally for the proof of the last statement we only need to note that $x$ belong to multiplicative domains 
$\mathcal{D}\left(\Phi\right)$ 
if and only if  we have:
\[
\pi_{\infty}\left(x\right)\mathbf{V}_{\mathbf{\infty}}\mathbf{V}_{\infty
}^{\ast}=\mathbf{V}_{\mathbf{\infty}}\mathbf{V}_{\infty}^{\ast}\pi_{\infty
}\left(x\right).
\].
\end {proof}
\begin{remark}
Let $(\mathfrak{M},\Phi)$ be a quantum process, the representation $\pi_{\infty}(a):\mathfrak{M}\rightarrow\mathfrak{B}(\mathcal{H_{\infty}})$ defined in proposition \ref{prop-stinespring} is normal, since the Stinespring representation $\sigma_{\Phi}:\mathfrak{A}\rightarrow\mathcal{B}(\mathcal{L}_{\Phi})$ is a normal map. Then $(\pi_{\infty},\mathcal{H}_{\infty},\mathbf{V}_{\infty})$ is a covariant representation of quantum process.
\end{remark}

\subsection{Dilations of ucp-Maps}
If 
$\left(\mathcal{H}_{\infty},\pi_{\infty},\mathbf{V}_{\infty}\right)$ 
is the Stinespring representation of proposition \ref{prop-stinespring}, we
have that 
$\mathbf{V}_{\infty}^{\ast}\pi_{\infty}\left(\mathfrak{A}\right)
\mathbf{V}_{\mathbf{\infty}}\subset\pi_{\infty}\left(\mathfrak{A}\right)$
and by proposition \ref{prop-invariantalgebra} there exists a C*-algebra with unit of 
$\mathfrak{\mathcal{B}}\left(\widehat{\mathcal{H}}\right)$ 
such that:
\newline1 - $\mathbf{Z}\pi_{\infty}\left(\mathfrak{A}\right)
\mathbf{Z}^{\ast}\subset\widehat{\mathfrak{A}},$
\newline2 - $\mathbf{Z}^{\ast
}\widehat{\mathfrak{A}}\mathbf{Z}=\pi_{\infty}\left(\mathfrak{A}\right)
\mathfrak{,}$
\newline3 - $\mathbf{Z}^*\widehat{\mathbf{V}}^*X\widehat{\mathbf{V}}\mathbf{Z=\mathbf{V}\pi_{\infty}\left(  \mathbf{Z}
^{\ast}X\mathbf{Z}\right)V}$, \ for all $\mathbf{X}\in\widehat{\mathfrak{A}}$.
\newline 
Furthermore, we have a homomorphism 
$\widehat{\Phi}:\widehat{\mathfrak{A}}\rightarrow\widehat{\mathfrak{A}}$ 
thus defined
\[
\widehat{\Phi}(X)=\widehat{\mathbf{V}}^*X\widehat{\mathbf{V}}
\]
for all $X\in\widehat{\mathfrak{A}}$, such that for $A\in\mathfrak{A}$, $X\in\widehat{\mathfrak{A}}$ and $n\in\mathbb{N}$ we have:
\[
\Phi^{n}(A)=\mathbf{Z}^*\widehat{\Phi}^{n}(\mathbf{Z}A\mathbf{Z}^*)\mathbf{Z}
\] 
and 
\[
\mathbf{Z}^*\widehat{\Phi}^{n}(X)\mathbf{Z}=\Phi^{n}(\mathbf{Z}^*X\mathbf{Z}).
\]
The quadruple
$(\widehat{\Phi},\widehat{\mathfrak{A}},\mathcal{H},\mathbf{Z})$
with the above properties, is said to be a multiplicative dilation
of ucp-map 
$\Phi:\mathfrak{A}\rightarrow\mathfrak{A}$.

\begin{remark}
It is clear that these results are easily extended to the von Neumann algebras $\mathfrak{M}$ with $\Phi$ normal ucp-map. In this way we obtain a dilation of discrete quantum process $(\mathfrak{M},\Phi)$.
\end{remark}

\section{Ergodic properties}
Let $\mathfrak{A}$ be a concrete C*-algebra of 
$\mathfrak{\mathcal{B}}\left(\mathcal{H}\right)$ 
with unit, 
$\Phi:\mathfrak{A}\rightarrow\mathfrak{A}$ 
an ucp-map and 
$\varphi$ 
a state on 
$\mathfrak{A}$ 
such that 
$\varphi\circ\Phi=\varphi$. We recall (See N.S.Z. Ref.\cite{NSZ}) that the state $\varphi$ is a ergodic
state, relative to the ucp-map $\Phi$, if
\[
\underset{n\rightarrow\infty}{\lim}\dfrac{1}{n+1}\sum \limits_{k=0}^{n}
[\varphi(a\Phi^{k}(b))-\varphi(a)\varphi(b)]=0,
\]
for all $a,b\in\mathfrak{A}$, while is weakly mixing if
\[
\underset{n\rightarrow\infty}{\lim}\dfrac{1}{n+1}\sum \limits_{k=0}^{n}
|\varphi(a\Phi^{k}(b))-\varphi(a)\varphi(b)| =0,
\]
for all $a,b\in\mathfrak{A}$.
\newline
We observe that by the Stinepring-type theorem \ref{prop-stinespring} we can assume, without losing generality, that $\mathfrak{A}$\ is a concrete C*-algebra of 
$\mathfrak{B}\left(\mathcal{H}\right)$, 
and that there is a linear isometry 
$\mathbf{V}$ 
on 
$\mathcal{H}$ 
such that:
\begin{equation*}
\Phi \left( A\right) =\mathbf{V}^{\ast }A\mathbf{V}\text{ for all }A\in 
\mathfrak{A.}
\end{equation*}%
Then 
$\left(\widehat{\mathbf{V}},\widehat{\mathcal{H}},\mathbf{Z}\right)$
is the minimal unitary dilation of 
$\left(\mathbf{V},\mathcal{H}\right)$ 
and the C*-algebra 
$\widehat{\mathfrak{A}}$ 
defined in proposition \ref{prop-invariantalgebra} is included in $\mathfrak{B}(\widehat{\mathcal{H}})$.
\newline
We want to prove the following ergodic theorem, for dilation ucp-map 
$(\widehat{\Phi},\widehat{\mathfrak{A}},\mathcal{H},\mathbf{Z})$
previously defined:

\begin{proposition}
\label{proposition apx 1}
If the ucp-map $\Phi$ admits a $\varphi$-adjoint and $\varphi$ is a ergodic state, we obtain:
\[
\underset{N\rightarrow\infty}{\lim}\dfrac{1}{N+1}\sum \limits_{k=0}^{N}
[\varphi(\mathbf{Z}^*X\widehat{\mathbf{V}}^{k^{\ast}}Y\widehat{\mathbf{V}}^{k}\mathbf{Z})  -\varphi(\mathbf{Z}^{\ast}X\mathbf{Z}\varphi(\mathbf{Z}^*Y\mathbf{Z})]  =0,
\]
while if $\varphi$ is weakly mixing:
\[
\underset{N\rightarrow\infty}{\lim}\dfrac{1}{N+1}\sum \limits_{k=0}^{N}
|\varphi(\mathbf{Z}^*X\widehat{\mathbf{V}}^{k^{\ast}}Y\widehat{\mathbf{V}}^{k}\mathbf{Z})  -\varphi(\mathbf{Z}^{\ast
}X\mathbf{Z})\varphi(\mathbf{Z}^*Y\mathbf{Z})|=0,
\]
for all $X,Y\in\widehat{\mathfrak{A}}.$
\end{proposition}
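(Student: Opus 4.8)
The plan is to recast the statement as an ergodicity/weak–mixing assertion for the dilated system $(\widehat{\mathfrak A},\widehat\Phi)$ and then reduce it, through the explicit form of the Nagy dilation recalled in Section 2, to the hypothesis on $\varphi$. Set $\widehat\Phi(W):=\widehat{\mathbf V}^{\ast}W\widehat{\mathbf V}$; by part 2 of Proposition \ref{prop-invariantalgebra} this is a unital $\ast$-endomorphism of $\widehat{\mathfrak A}$, and since $\widehat{\mathbf V}$ is unitary $\widehat\Phi^{k}(Y)=\widehat{\mathbf V}^{k\ast}Y\widehat{\mathbf V}^{k}$. Set $\widehat\varphi(W):=\varphi(\mathbf Z^{\ast}W\mathbf Z)$, a state on $\widehat{\mathfrak A}$ (recall $\mathbf Z^{\ast}W\mathbf Z\in\mathfrak A$ for $W\in\widehat{\mathfrak A}$). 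From $\widehat{\mathbf V}\mathbf Z=\mathbf Z\mathbf V$ and $\mathbf Z^{\ast}\mathbf Z=\mathbf I$ one gets $\mathbf Z^{\ast}\widehat{\mathbf V}^{k\ast}=\mathbf V^{k\ast}\mathbf Z^{\ast}$, hence $\widehat\varphi\circ\widehat\Phi=\widehat\varphi$ (part 3 of Proposition \ref{prop-invariantalgebra}), and also $\varphi(\mathbf Z^{\ast}X\widehat{\mathbf V}^{k\ast}Y\widehat{\mathbf V}^{k}\mathbf Z)=\widehat\varphi(X\widehat\Phi^{k}(Y))$ and $\varphi(\mathbf Z^{\ast}X\mathbf Z)\varphi(\mathbf Z^{\ast}Y\mathbf Z)=\widehat\varphi(X)\widehat\varphi(Y)$. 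So it is enough to prove that $\widehat\varphi$ is ergodic (resp. weakly mixing) for $(\widehat{\mathfrak A},\widehat\Phi)$.

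Next I would split the correlation function. Write $X,Y\in\widehat{\mathfrak A}\subseteq C^{\ast}(\mathcal S)$ in the block form of \ref{S-Algebra} relative to $\widehat{\mathcal H}=\mathcal H\oplus l^{2}(\mathbf F\mathcal H)$, with $A_{X}:=\mathbf Z^{\ast}X\mathbf Z$ the $(1,1)$-corner and $X_{12}\in\overline{\mathfrak A\Gamma\mathfrak X}$ the $(1,2)$-corner (similarly for $Y$), and insert the explicit $\widehat{\mathbf V}^{k}$ of Section 2. A direct computation yields $(\widehat\Phi^{k}Y)_{11}=\mathbf V^{k\ast}A_{Y}\mathbf V^{k}=\Phi^{k}(A_{Y})$ and $(\widehat\Phi^{k}Y)_{21}=C(k)^{\ast}A_{Y}\mathbf V^{k}+\mathbf W^{k\ast}Y_{21}\mathbf V^{k}$, hence
\[
\widehat\varphi\bigl(X\widehat\Phi^{k}(Y)\bigr)=\varphi\bigl(A_{X}\Phi^{k}(A_{Y})\bigr)+\varphi\bigl(X_{12}\,C(k)^{\ast}A_{Y}\mathbf V^{k}\bigr)+\varphi\bigl(X_{12}\,\mathbf W^{k\ast}Y_{21}\mathbf V^{k}\bigr).
\]
As $\widehat\varphi(X)\widehat\varphi(Y)=\varphi(A_{X})\varphi(A_{Y})$, the Cesàro mean of the first term already tends to $\widehat\varphi(X)\widehat\varphi(Y)$ by ergodicity of $\varphi$ (resp. its absolute-value mean tends to $0$ by weak mixing), so the whole problem reduces to showing that the Cesàro (absolute-value) means of the two cross terms vanish. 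By bilinearity and the uniform bound on the Cesàro averages it suffices to take $X_{12}$, $Y_{21}$ equal to single words $A'\Gamma(\alpha)\Delta_{h}(D,\mu,\nu)$, the approximation tails contributing $O(\varepsilon)$.

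For such a word $X_{12}$ reads off a fixed component $p$ of $l^{2}(\mathbf F\mathcal H)$, so by \ref{rel-1} the operator $X_{12}\mathbf W^{k\ast}$ vanishes for $k>p$: the third cross term is eventually $0$. For the middle term, \ref{rel-2} gives $X_{12}C(k)^{\ast}=0$ for $k\le p$ and $X_{12}C(k)^{\ast}=(\rho|\,\mathbf F\,\mathbf V^{(k-p-1)\ast}$ for $k>p$, with $(\rho|$ a fixed string of weight $p+1$. Putting $m:=k-p-1$ and using $\mathbf V^{m\ast}A_{Y}\mathbf V^{k}=\Phi^{m}(A_{Y})\mathbf V^{p+1}$, then contracting the $\mathbf V^{\ast}$'s occurring in $(\rho|$ against the trailing $\mathbf V^{p+1}$ by $\varphi\circ\Phi=\varphi$ — and, wherever an iterate $\Phi^{j}$ must be transported past $\varphi$, the $\varphi$-adjoint relation $\varphi(\Phi^{j}(a)b)=\varphi(a\Phi_{\natural}^{j}(b))$ — the middle term takes the form $\langle\chi,\,T_{\varphi}^{m}\,\lambda_{\varphi}(A_{Y})\Omega_{\varphi}\rangle$, where $(\mathcal K_{\varphi},\lambda_{\varphi},\Omega_{\varphi})$ is the GNS triple of $\varphi$, $T_{\varphi}$ is the contraction with $T_{\varphi}\lambda_{\varphi}(a)\Omega_{\varphi}=\lambda_{\varphi}(\Phi(a))\Omega_{\varphi}$ (well defined by Kadison–Schwarz and $\varphi\circ\Phi=\varphi$, and whose adjoint implements $\Phi_{\natural}$, so that $\varphi$ is also ergodic, resp. weakly mixing, for $\Phi_{\natural}$), and $\chi\in\mathcal K_{\varphi}$ is a vector independent of $k$. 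The key point is $\langle\chi,\Omega_{\varphi}\rangle=0$: this amounts to the vanishing of the middle term when $A_{Y}$ is replaced by $\mathbf I$, which holds because the resulting operator carries the factor $\mathbf F\mathbf V^{p+1}=(\mathbf F\mathbf V)\mathbf V^{p}=0$. In the ergodic case von Neumann's mean ergodic theorem gives $\tfrac1{N+1}\sum_{m\le N}T_{\varphi}^{m}\to P_{\varphi}=|\Omega_{\varphi}\rangle\langle\Omega_{\varphi}|$ strongly (this is precisely ergodicity of $\varphi$), so the Cesàro mean of the middle term converges to $\langle\chi,\Omega_{\varphi}\rangle\langle\Omega_{\varphi},\lambda_{\varphi}(A_{Y})\Omega_{\varphi}\rangle=0$; in the weakly mixing case one uses instead $\tfrac1{N+1}\sum_{m\le N}|\langle\xi,(T_{\varphi}^{m}-P_{\varphi})\eta\rangle|\to0$ for all $\xi,\eta\in\mathcal K_{\varphi}$ (which follows from weak mixing of $\varphi$, extended to arbitrary vectors by density, and is made symmetric by $\varphi$-adjointness) together again with $\langle\chi,\Omega_{\varphi}\rangle=0$. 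Adding the three contributions gives both assertions.

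The main obstacle is the middle cross term: one must carry the string/gamma calculus of \ref{rel-1}–\ref{rel-2} far enough that the dependence on $k$ is isolated as a single power $T_{\varphi}^{m}$ applied to a fixed vector, and then verify the orthogonality $\langle\chi,\Omega_{\varphi}\rangle=0$ — this is the one place where $\mathbf F\mathbf V=0$, $\varphi\circ\Phi=\varphi$ and the $\varphi$-adjointness of $\Phi$ are all used. The remaining ingredients (the first term, the eventual vanishing of the third term, and the passage from dense words to arbitrary $X,Y$ by bilinearity and boundedness of the Cesàro means) are routine once the block decomposition is in place.
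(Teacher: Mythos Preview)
Your proposal is correct and follows the same architecture as the paper: the block decomposition of $\widehat\varphi(X\widehat\Phi^{k}(Y))$ into the three terms, the reduction of $X_{12}$ to single words in $\mathfrak{A}\Gamma\mathfrak{X}$, the eventual vanishing of the $\mathbf W^{k\ast}$ term via \ref{rel-1}, and the treatment of the $C(k)^{\ast}$ term via \ref{rel-2} are exactly what the paper does in Lemma~\ref{lemma apx 1}.

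The only substantive difference is how the Ces\`aro mean of the middle cross term is shown to vanish. The paper expands $\mathbf F=\mathbf I-\mathbf V\mathbf V^{\ast}$ and, after repeated use of $\varphi(\Phi(a)b)=\varphi(a\Phi_{\natural}(b))$, writes $\varphi(X_{12}C(k)^{\ast}A_{Y}\mathbf V^{k})$ explicitly as a difference $\varphi(B'\Phi^{m}(A_{Y}))-\varphi(B''\Phi^{m+1}(A_{Y}))$ for fixed $B',B''\in\mathfrak A$, then checks by hand that the two ergodic limits $\varphi(B')\varphi(A_{Y})$ and $\varphi(B'')\varphi(A_{Y})$ coincide. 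You package the same computation in the GNS picture: the middle term becomes $\langle\chi,T_{\varphi}^{m}\lambda_{\varphi}(A_{Y})\Omega_{\varphi}\rangle$ with a fixed $\chi$, the equality $\varphi(B')=\varphi(B'')$ is encoded as $\langle\chi,\Omega_{\varphi}\rangle=0$ (verified elegantly by substituting $A_{Y}=\mathbf I$ and using $\mathbf F\mathbf V=0$), and the conclusion comes from the mean ergodic theorem for the contraction $T_{\varphi}$. Your version is a bit more conceptual and makes the cancellation mechanism transparent; the paper's version is more explicit and avoids invoking the GNS construction. Both rely on $\varphi$-adjointness at exactly the same point, namely to pull the nested $\Phi^{n_{i}}$'s in the string $(\rho|$ across $\varphi$.
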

If we write every element $X$ of 
$\mathcal{B}\left(\widehat{\mathcal{H}}\right)$ 
in matrix form 
$X=\left\vert
\begin{array}
[c]{cc}%
X_{1,1} & X_{1,2}\\
X_{2,1} & X_{2,2}%
\end{array}
\right\vert $ 
with 
$\widehat{\mathcal{H}}=\mathcal{H}\oplus l^{2}\left(
\mathbf{F}\mathcal{H}\right)$ 
we obtain:
\[
\varphi\left(\mathbf{Z}^{\ast}X\widehat{\mathbf{V}}^{k^{\ast}}%
Y\widehat{\mathbf{V}}^{k}\mathbf{Z}\right)=\varphi\left(X_{1,1}%
\mathbf{V}^{k}Y_{1,1}\mathbf{V}^{k}\right)+\varphi\left(X_{1,2}%
\mathbf{C}\left(  k\right)^{\ast}Y_{1,1}\mathbf{V}^{k}\right)
+\varphi\left(X_{1,2}\mathbf{W}^{k^{\ast}}Y_{2,1}\mathbf{V}^{k}\right)
\]
and the proof of previous proposition is an easy consequence of the following lemma:

\begin{lemma}
\label{lemma apx 1}
Let $X\in\mathcal{A}^*(\mathcal{S})$, the *-algebra generated by operator system $\mathcal{S}$ defined in \ref{eq-S-operator-systems} and 
$Y\in\widehat{\mathfrak{A}}$,
\newline a] if $\varphi$ is an ergodic state
we have:
\begin{equation}
\underset{N\rightarrow\infty}{\lim}\dfrac{1}{N+1}%
{\textstyle\sum\limits_{k=0}^{N}}
\varphi\left(X_{1,2}\mathbf{C}\left(k\right)^{\ast}Y_{1,1}%
\mathbf{V}^{k}+X_{1,2}\mathbf{W}^{k^{\ast}}Y_{2,1}\mathbf{V}^{k}\right)=0,
\label{ergodica a]}%
\end{equation}
b] if $\varphi$ is weakly mixing we have:
\begin{equation}
\underset{N\rightarrow\infty}{\lim}\dfrac{1}{N+1}%
{\textstyle\sum\limits_{k=0}^{N}}
\left\vert \varphi\left(X_{1,2}\mathbf{C}\left(k\right)^{\ast
}Y_{1,1}\mathbf{V}^{k}+X_{1,2}\mathbf{W}^{k^{\ast}}Y_{2,1}\mathbf{V}%
^{k}\right)\right\vert =0. \label{ergodica b]}%
\end{equation}

\end{lemma}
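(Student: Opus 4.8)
The plan is to reduce both statements to a single quantitative estimate on the Cesàro averages of terms of the form $\varphi(A\,\mathbf{C}(k)^{\ast}Y_{1,1}\mathbf{V}^{k})$ and $\varphi(A\,\mathbf{W}^{k^{\ast}}Y_{2,1}\mathbf{V}^{k})$, where $A$ ranges over the entry $X_{1,2}$ of an element of $\mathcal{A}^{\ast}(\mathcal{S})$. By the description \eqref{S-Algebra} of $\mathcal{A}^{\ast}(\mathcal{S})$, the off-diagonal entry $X_{1,2}$ lies in $\mathfrak{A}\Gamma\mathfrak{X}$, so it suffices by linearity and density (recall $\mathfrak{X}$ is the norm closure of the span of napla operators $\Delta_k(A,\alpha,\beta)$) to treat the case where $X_{1,2}=B\,\Gamma(\alpha)\,T$ with $B\in\mathfrak{A}$, $\Gamma(\alpha)=(\alpha|\mathbf{F}\Pi_{\overset{\cdot}{\alpha}-1}$ a gamma operator, and $T$ a single napla operator; similarly $Y_{2,1}\in\mathfrak{X}\Gamma^{\ast}\mathfrak{A}$ and $Y_{1,1}\in\mathfrak{A}$.

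First I would compute the two families of products $\mathbf{C}(k)^{\ast}Y_{1,1}\mathbf{V}^{k}$ and $\mathbf{W}^{k^{\ast}}Y_{2,1}\mathbf{V}^{k}$ explicitly using the formulas \eqref{rel-1} and \eqref{rel-2} from Section 2: $C(k)^{\ast}\Psi=(\mathbf{F}\mathbf{V}^{(k-1)^{\ast}}\Psi,\dots,\mathbf{F}\Psi,0,\dots)$ and $\mathbf{W}^{m^{\ast}}$ shifts to the right, so that when composed with a gamma operator $\Gamma(\alpha)$ from the left and $\mathbf{V}^k$ from the right, the projections $\Pi_j$ select finitely many indices. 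The key point is that after these contractions, for each fixed $k$ the operator $X_{1,2}\mathbf{C}(k)^{\ast}Y_{1,1}\mathbf{V}^{k}$ collapses to a finite sum of operators of the form $R\,\mathbf{V}^{j}\,S\,\mathbf{V}^{k-\ell}$ with $R,S\in\mathfrak{A}$ and $j,\ell$ bounded by the (fixed) weights $\overset{\cdot}{\alpha}$, $\overset{\cdot}{\beta}$ of the strings involved; because $\mathbf{V}^{\ast}\mathfrak{A}\mathbf{V}\subset\mathfrak{A}$ one rewrites $\mathbf{V}^{j}S\mathbf{V}^{-\ell+k}$, after absorbing the fixed powers, as $R'\mathbf{V}^{k'}$ or $\mathbf{V}^{k'\ast}R'$ with $k'=k+\text{const}$ and $R'\in\mathfrak{A}$, so that $\varphi(\,\cdot\,)$ becomes, up to finitely many fixed shifts, a linear combination of terms $\varphi(R'\,\mathbf{V}^{k'\ast}S'\mathbf{V}^{k'})=\varphi(R'\,\Phi^{k'}(S'))$. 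Here is where the $\varphi$-adjoint is used: it guarantees $\varphi(A\Phi^m(B))$ is controlled (indeed one can move the $\mathbf{V}$-powers across $\varphi$ via $\varphi(\Phi(a)b)=\varphi(a\Phi_{\natural}(b))$), so each term is of exactly the form $\varphi(a\,\Phi^{k}(b))$ appearing in the ergodicity/weak-mixing hypothesis.

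Once every term is reduced to $\varphi(a\,\Phi^{k+c}(b))$ for fixed constants $c$ and fixed $a,b\in\mathfrak{A}$, I would invoke the hypothesis on $\varphi$. In case (b), weak mixing gives $\frac{1}{N+1}\sum_{k=0}^{N}|\varphi(a\Phi^{k}(b))-\varphi(a)\varphi(b)|\to 0$, and a shift by the constant $c$ does not affect the Cesàro limit; one then needs to check that the $\varphi(a)\varphi(b)$ ``constant'' parts either cancel or vanish — and they vanish here because each such term carries at least one factor $\mathbf{F}=\mathbf{I}-\mathbf{V}\mathbf{V}^{\ast}$ sandwiched in a position forcing $b$ (or $a$) to lie in the range of $\mathbf{I}-\Phi(\mathbf{I})\cdots$, equivalently the relevant $\varphi(b)$-type scalar is $\varphi$ of something of the form $c-\Phi(c)$ or an element annihilated by $\varphi$ after the invariance $\varphi\circ\Phi=\varphi$ is applied. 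Finally, the triangle inequality over the finitely many terms closes case (b), and case (a) follows a fortiori (or by the same argument with absolute values removed, using ergodicity directly). The main obstacle I anticipate is the bookkeeping in the second step: tracking how the indices of the projections $\Pi_j$, the string weights, and the $\mathbf{V}$-powers interact so as to confirm that (i) only finitely many nonzero terms survive for each $k$, uniformly in $k$, and (ii) the surviving scalars are genuinely of the form $\varphi(a\Phi^{k+c}(b))$ with $a,b$ independent of $k$ — this is delicate precisely because $\mathbf{C}(k)$ itself is a $k$-dependent sum $\sum_{j=1}^{k}\mathbf{V}^{k-j}\mathbf{F}\Pi_{j-1}$, and one must see that the gamma/napla projections kill all but a bounded number of the $j$'s.
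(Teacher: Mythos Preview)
Your plan is correct and matches the paper's approach: reduce $X_{1,2}$ to the elementary form $A\,\Gamma(\gamma)\,\Delta_{m}(B,\alpha,\beta)$, use the fixed projection $\Pi_{\dot\beta+m}$ appearing on the right of $X_{1,2}$ to kill the $\mathbf{W}^{k\ast}$ term for $k>\dot\beta+m$ and to select a single summand of $\mathbf{C}(k)^{\ast}$, then rewrite the surviving expression via $\Phi_{\natural}$ as a difference (coming from $\mathbf{F}=\mathbf{I}-\mathbf{V}\mathbf{V}^{\ast}$) of two terms of the form $\varphi(a\,\Phi^{k+c}(Y_{1,1}))$ whose Ces\`aro limits coincide and hence cancel. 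The index bookkeeping you flag as the main obstacle is exactly the computation the paper carries out explicitly.
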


\begin {proof}
Since 
$X\in\mathcal{A}^{\ast}\left(\mathcal{S}\right)$ 
we can assume that
$X_{1,2}=A\Gamma\left(\gamma\right)\Delta_{m}\left(B,\alpha ,\beta \right)$ 
with 
$A,B\in\mathfrak{A}$ 
and 
$\gamma$ 
string of 
$\mathfrak{A}$. 
Then:
\begin{equation}
X_{1,2}=A\left(\gamma\right\vert \mathbf{F\Pi}_{\overset{\cdot }{\gamma}%
-1}\Pi _{\overset{\cdot }{\alpha }+m}^{^{\ast }}\mathbf{F}\left\vert \alpha
\right) B\left( \beta \right\vert \mathbf{F}\Pi _{\overset{\cdot }{\beta}+m}=
\left\{\begin{array}{cc}
A\left(\gamma \right\vert \mathbf{F}\left\vert \alpha \right) B\left( \beta
\right\vert \mathbf{F}\Pi _{\overset{\cdot }{\beta}+m} & \overset{.}{\gamma }-1=\overset{.}{\alpha }+m \\ \mathbf{0} & \text{elsewhere}
\end{array}
\right. \label{eq-ergod1}
\end{equation}
Now we observe taht there is a natural number $k_{o}$ such that for each 
$k>k_{o} $ 
we obtain:
\[
X_{1,2}\mathbf{W}^{k^{\ast}}Y_{2,1}\mathbf{V}^{k}=0
\]
In fact we have that
\[
\mathbf{W}^{k^{\ast}}\left( \xi_{0},\xi_{1}...\xi_{n}...\right)  =\left(
\overset{k-time}{\overbrace{0,...0}},\xi_{0},\xi_{1}...\right)  ,
\]
for all 
$\left(\xi_{0},\xi_{1}...\xi_{n}..\right)  \in l^{2}\left(
\mathbf{F}\mathcal{H}\right)$ 
then 
$\Pi_{\beta+m}\mathbf{W}^{k^{\ast}}=\mathbf{0}$ 
for all 
$k>\overset{.}{\beta}+m$.
\newline 
It follows that:
\[
\underset{N\rightarrow\infty}{\lim}\dfrac{1}{N+1}%
{\textstyle\sum\limits_{k=0}^{N}}
\varphi\left(X_{1,2}\mathbf{C}\left(k\right)^{\ast}Y_{1,1}%
\mathbf{V}^{k}+X_{1,2}\mathbf{W}^{k^{\ast}}Y_{2,1}\mathbf{V}^{k}\right)=
\underset{N\rightarrow\infty}{\lim}\dfrac{1}{N+1}%
{\textstyle\sum\limits_{k=0}^{N}}
\varphi\left(X_{1,2}\mathbf{C}\left(k\right)^{\ast}Y_{1,1}
\mathbf{V}^{k}\right),
\]
Then we compute only the term 
$\varphi\left(X_{1,2}\mathbf{C}\left(k\right)^{\ast}Y_{1,1}
\mathbf{V}^{k}\right)$ 
and by relationship \ref{eq-ergod1} we can write that:
\[
X_{1,2}\mathbf{C}\left(k\right)^{\ast}Y_{1,1}\mathbf{V}^{k}=
A\left(\gamma \right\vert \mathbf{F}\left\vert \alpha \right) B\left(\beta
\right\vert \mathbf{F}\Pi _{\overset{\cdot }{\beta}+m}\mathbf{C}\left(k\right)^{\ast}Y_{1,1}\mathbf{V}^{k}
\]
moreover by relationship \ref{rel-2} for 
$k>\overset{.}{\beta}+m $ 
we have:
\[
\Pi _{\overset{\cdot }{\beta}+m}\mathbf{C}\left(k\right)^{\ast}=\mathbf{FV}
^{\left(k-\beta-m-1\right)^{\ast}},
\]
it follows that 
\[
X_{1,2}\mathbf{C}\left(k\right)^{\ast}Y_{1,1}\mathbf{V}^{k}
=
A\left(\gamma\right\vert\mathbf{F}\left\vert \alpha\right)B\left(\beta\right\vert \mathbf{FV}
^{\left(k-\beta-m-1\right)^{\ast}}Y_{1,1}\mathbf{V}^{k}
=
A\left(\gamma\right\vert \mathbf{F}\left\vert \alpha\right)B\left(
\beta\right\vert\mathbf{F}\Phi^{\left(k-\beta-1\right)}\left(
Y_{1,1}\right)\mathbf{V}^{\beta+m+1}.
\]
Since $\overset{\cdot }{\gamma }=\overset{\cdot }{\alpha }+m+1 $, by relationship \ref{eq-napla-1} we obtain:
\begin{equation*}
A\left(\gamma \right\vert \mathbf{F}\left\vert \alpha \right) B\left( \beta
\right\vert \in \mathfrak{A}\left( \overset{.}{\beta}+m+1\right\vert, 
\end{equation*}
it follows that there exists a 
$\vartheta $ 
string of 
$\mathfrak{A}$ 
with 
$\overset{\cdot}{\vartheta }=\overset{\cdot}{\beta}+m+1$ and an operator $R\in \mathfrak{A}$, 
such that
\begin{equation*}
A\left(\gamma \right\vert \mathbf{F}\left\vert \alpha \right)B\left(\beta
\right\vert 
=
R\left(\vartheta \right\vert .
\end{equation*} 
Then
\[
X_{1,2}\mathbf{C}\left(k\right)^{\ast}Y_{1,1}\mathbf{V}^{k}
=
R\left(\vartheta\right\vert\mathbf{F}\Phi^{\left(k-\beta-1\right)}\left(
Y_{1,1}\right)\mathbf{V}^{\beta+m+1}.
\]
If we set 
$\vartheta =\left(n_{1},n_{2},...n_{r},A_{1}\mathbf{,}A_{2},....A_{r}\right) .$
we have $n_{1}+n_{2}+...+n_{r}= \overset{.}{\beta}+m+1 $ and
\begin{equation*}
R\left(\vartheta\right\vert \mathbf{F}\Phi ^{\left(k-\overset{\cdot}{\beta}-1\right)}\left( Y_{1,1}\right) \mathbf{V}^{\overset{\cdot}{\beta}+m+1}
=
R\mathbf{\mathbf{V}}^{n_{r}^{\ast }}A_{r}\mathbf{\mathbf{V}}
^{n_{r-1}^{\ast }}A_{r-1}\cdot \cdot \cdot A_{2}\mathbf{\mathbf{V}}
^{n_{1}^{\ast }}A_{1}\mathbf{F}\Phi ^{\left( k-\overset{\cdot }{\beta}
-1\right)}\left( Y_{1,1}\right)\mathbf{V}^{\overset{\cdot}{\beta }+m+1}
=
\end{equation*}
\begin{equation*}
=R\Phi ^{n_{r}}\left( A_{r}\Phi ^{n_{r-1}}\left( A_{r-1}\cdot \cdot \cdot
\Phi ^{n_{2}}\left( A_{2}\mathbf{R}_{k}\right) \right) \right) ,
\end{equation*}%
where
\[
\mathbf{R}_{k}=\Phi ^{n_{r}}\left(A_{r}\right)\Phi^{\left( k-\beta
-1\right)}\left(Y_{1,1}\right)-\Phi ^{n_{r}-1}\left( \Phi\left(
A_{r}\right) \Phi ^{\left( k-\beta \right) }\left( Y_{1,1}\right) \right). 
\]
We have:
\newline
\newline
$\varphi \left( X_{1,2}\mathbf{C}\left( k\right)^{\ast
}Y_{1,1}\mathbf{V}^{k}\right) =\varphi \left( R\Phi ^{n_{r}}\left( A_{r}\Phi ^{n_{r-1}}\left(A_{r-1}\cdot \cdot \cdot \Phi ^{n_{2}}\left( A_{2}\mathbf{R}_{k}\right)
\right) \right) \right) =$

$=\varphi \left( \Phi _{\natural }^{n_{r}}\left( R\right) A_{r}\Phi
^{n_{r-1}}\left( A_{r-1}\left( \cdot \cdot \cdot \Phi ^{n_{2}}\left( A_{2}%
\mathbf{R}_{k}\right) \right. \right) \right) =$

$=\varphi \left( \Phi _{\natural }^{n_{r-1}}\left( \Phi _{\natural
}^{n_{r}}\left( R\right) A_{r}\right) A_{r-1}\left( A_{r-2}\cdot \cdot \cdot
A_{3}\Phi ^{n_{2}}\left( A_{2}\mathbf{R}_{k}\right) \right. \right) =$

$=\varphi \left( \Phi _{\natural }^{n_{2}}\left( \Phi _{\natural
}^{n_{3}}\cdot \cdot \cdot \Phi _{\natural }^{n_{r-1}}\left( \Phi _{\natural
}^{n_{r}}\left( R\right) A_{r}\right) \cdot \cdot \cdot A_{3}\right) A_{2}%
\mathbf{R}_{k}\right)$
\newline
\newline
and replacing $\mathbf{R}_{k}$, we obtain:
\newline
$\Phi _{\natural }^{n_{2}}\left( \Phi _{\natural }^{n_{3}}\cdot \cdot \cdot
\Phi _{\natural }^{n_{r-1}}\left( \Phi _{\natural }^{n_{r}}\left( R\right)
A_{r}\right) \cdot \cdot \cdot A_{3}\right) A_{2}\mathbf{R}_{k}=$

$=\Phi _{\natural }^{n_{2}}\left( \Phi _{\natural }^{n_{3}}\cdot \cdot \cdot
\Phi _{\natural }^{n_{r-1}}\left( \Phi _{\natural }^{n_{r}}\left( R\right)
A_{r}\right) \cdot \cdot \cdot A_{3}\right) A_{2}\Phi ^{n_{1}}\left(
A_{1}\right) \Phi ^{\left( k-\beta -1\right) }\left( Y_{1,1}\right) -$

$-\Phi _{\natural }^{n_{2}}\left( \Phi _{\natural }^{n_{3}}\cdot \cdot \cdot
\Phi _{\natural }^{n_{r-1}}\left( \Phi _{\natural }^{n_{r}}\left( R\right)
A_{r}\right) \cdot \cdot \cdot A_{3}\right) A_{2}\Phi ^{n_{1}-1}\left( \Phi
\left( A_{1}\right) \Phi ^{\left( k-\beta \right) }\left( Y_{1,1}\right)
\right) .$
\newline
\newline
Then:
\newline
$\varphi \left( X_{1,2}\mathbf{C}\left( k\right) ^{\ast }Y_{1,1}\mathbf{V%
}^{k}\right) =$

$=\varphi \left( \Phi _{\natural }^{n_{2}}\left( \Phi _{\natural
}^{n_{3}}\cdot \cdot \cdot \Phi _{\natural }^{n_{r-1}}\left( \Phi _{\natural
}^{n_{r}}\left( R\right) A_{r}\right) \cdot \cdot \cdot A_{3}\right)
A_{2}\Phi ^{n_{1}}\left( A_{1}\right) \Phi ^{\left( k-\beta -1\right)
}\left( Y_{1,1}\right) \right) -$

$-\varphi \left( \Phi _{\natural }^{n_{2}}\left( \Phi _{\natural
}^{n_{3}}\cdot \cdot \cdot \Phi _{\natural }^{n_{r-1}}\left( \Phi _{\natural
}^{n_{r}}\left( R\right) A_{r}\right) \cdot \cdot \cdot A_{3}\right)
A_{2}\Phi ^{n_{1}-1}\left( \Phi \left( A_{1}\right) \Phi ^{\left( k-\beta
\right) }\left( Y_{1,1}\right) \right) \right)$.
\newline
\newline
It follows that :
\newline
$\dfrac{1}{N+1}\sum\limits_{k=0}^{N}\varphi \left( X_{1,2}\mathbf{C}\left( k\right) ^{\ast }Y_{1,1}\mathbf{V}^{k}\right) = $
\newline
$=\dfrac{1}{N+1}\sum\limits_{k=0}^{N}\varphi \left( \Phi _{\natural
}^{n_{2}}\left( \Phi _{\natural }^{n_{3}}\cdot \cdot \cdot \Phi _{\natural
}^{n_{r-1}}\left( \Phi _{\natural }^{n_{r}}\left( R\right) A_{r}\right)
\cdot \cdot \cdot A_{3}\right) A_{2}\Phi ^{n_{1}}\left( A_{1}\right) \Phi
^{\left( k-\beta -1\right) }\left( Y_{1,1}\right) \right) -$\newline
$-\dfrac{1}{N+1}\sum\limits_{k=0}^{N}\varphi \left( \Phi _{\natural
}^{n_{2}}\left( \Phi _{\natural }^{n_{3}}\cdot \cdot \cdot \Phi _{\natural
}^{n_{r-1}}\left( \Phi _{\natural }^{n_{r}}\left( R\right) A_{r}\right)
\cdot \cdot \cdot A_{3}\right) A_{2}\Phi ^{n_{1}-1}\left( \Phi \left(
A_{1}\right) \Phi ^{\left( k-\beta \right) }\left( Y_{1,1}\right) \right)
\right) $.
\newline
\newline
If the state $\varphi$ is ergodic we have:
\newline
$\underset{N\rightarrow \infty }{\lim }\dfrac{1}{N+1}\sum\limits_{k=0}^{N}
\varphi \left( \Phi _{\natural }^{n_{2}}\left( \Phi _{\natural
}^{n_{3}}\cdot \cdot \cdot \Phi _{\natural }^{n_{r-1}}\left( \Phi_{\natural
}^{n_{r}}\left( R\right) A_{r}\right) \cdot \cdot \cdot A_{3}\right)
A_{2}\Phi ^{n_{1}-1}\left( \Phi \left( A_{1}\right) \Phi ^{\left( k-\beta
\right) }\left( Y_{1,1}\right) \right) \right) =$

$=\varphi \left( \Phi _{\natural }^{n_{2}}\left( \Phi _{\natural
}^{n_{3}}\cdot \cdot \cdot \Phi _{\natural }^{n_{r-1}}\left( \Phi _{\natural
}^{n_{r}}\left( R\right) A_{r}\right) \cdot \cdot \cdot A_{3}\right)
A_{2}\Phi ^{n_{1}}\left( A_{1}\right) \right) \varphi \left( Y_{1,1}\right) = $

$=\varphi \left( \Phi _{\natural }^{n_{1}}\left( \Phi _{\natural
}^{n_{2}}\left( \Phi _{\natural }^{n_{3}}\cdot \cdot \cdot \Phi _{\natural
}^{n_{r-1}}\left( \Phi _{\natural }^{n_{r}}\left( R\right) A_{r}\right)
\cdot \cdot \cdot A_{3}\right) A_{2}\right) A_{1}\right) \varphi \left(
Y_{1,1}\right) $
\newline
\newline
while
\newline
$\underset{N\rightarrow \infty }{\lim }\dfrac{1}{N+1}\sum\limits_{k=0}^{N}%
\varphi \left( \Phi _{\natural }^{n_{1}-1}\left( \Phi _{\natural
}^{n_{2}}\left( \Phi _{\natural }^{n_{3}}\cdot \cdot \cdot \Phi _{\natural
}^{n_{r-1}}\left( \Phi _{\natural }^{n_{r}}\left( R\right) A_{r}\right)
\cdot \cdot \cdot A_{3}\right) A_{2}\right) \Phi \left( A_{1}\right) \Phi
^{\left( k-\beta \right) }\left( Y_{1,1}\right) \right) =$\newline
$=\varphi \left( \Phi _{\natural }^{n_{1}-1}\left( \Phi _{\natural
}^{n_{2}}\left( \Phi _{\natural }^{n_{3}}\cdot \cdot \cdot \Phi _{\natural
}^{n_{r-1}}\left( \Phi _{\natural }^{n_{r}}\left( R\right) A_{r}\right)
\cdot \cdot \cdot A_{3}\right) A_{2}\right) \Phi \left( A_{1}\right) \right)
\varphi \left( Y_{1,1}\right) =$\newline
$=\varphi \left( \Phi _{\natural }\left( \Phi _{\natural }^{n_{1}-1}\left(
\Phi _{\natural }^{n_{2}}\left( \Phi _{\natural }^{n_{3}}\cdot \cdot \cdot
\Phi _{\natural }^{n_{r-1}}\left( \Phi _{\natural }^{n_{r}}\left( R\right)
A_{r}\right) \cdot \cdot \cdot A_{3}\right) A_{2}\right) \right)
A_{1}\right) \varphi \left( Y_{1,1}\right) ,$
\newline
\newline
then we obtain 
\newline
\newline
$ \underset{N\rightarrow \infty }{\lim }\dfrac{1}{N+1}\sum\limits_{k=0}^{N}
\varphi \left( X_{1,2}\mathbf{C}\left(k\right)^{\ast}Y_{1,1}\mathbf{V}^{k}\right) =0 $.
\newline
\newline
In weakly mixing case, using the previous results, we obtain:
\newline
\newline
$\left\vert \varphi \left( X_{1,2}\mathbf{C}_{k}^{\ast }Y_{1,1}\mathbf{V}
^{k}\right) \right\vert =
\left\vert \varphi \left(B\Phi ^{n_{1}}\left( A_{1}\right) \Phi ^{\left(
k-\overset{\cdot }{\beta }-1\right) }\left( Y_{1,1}\right) \right) -\varphi
\left( B\Phi ^{n_{1}-1}\left( \Phi \left( A_{1}\right)\Phi ^{\left( k-
\overset{\cdot }{\beta }\right) }\left( Y_{1,1}\right) \right)\right)
\right\vert $
\newline
\newline

where 
$B=\Phi _{\natural }^{n_{2}}\left( \Phi _{\natural }^{n_{3}}\cdot \cdot
\cdot \Phi _{\natural }^{n_{r-1}}\left( \Phi _{\natural }^{n_{r}}\left(
R\right) A_{r}\right) \cdot \cdot \cdot A_{3}\right) A_{2}$.
\newline
\newline
Adding and subtracting the element 
$\varphi \left( B\Phi ^{n_{1}}\left(
A_{1}\right) \right) \varphi \left( Y_{1,1}\right) $ we can write:
\newline
\newline
$\left\vert \varphi \left( B\Phi ^{n_{1}}\left( A_{1}\right) \Phi ^{\left( k-%
\overset{\cdot }{\beta }-1\right) }\left( Y_{1,1}\right) \right) -\varphi
\left( B\Phi ^{n_{1}-1}\left( \Phi \left( A_{1}\right) \Phi ^{\left( k-%
\overset{\cdot }{\beta }\right) }\left( Y_{1,1}\right) \right) \right)
\right\vert \leq $
\newline
$\leq \left\vert \varphi \left( B\Phi ^{n_{1}}\left( A_{1}\right) \Phi
^{\left( k-\overset{\cdot }{\beta }-1\right) }\left( Y_{1,1}\right) \right)
-\varphi \left( B\Phi ^{n_{1}}\left( A_{1}\right) \right) \varphi \left(
Y_{1,1}\right) \right\vert +$
\newline
$+\left\vert \varphi \left( B\Phi ^{n_{1}-1}\left( \Phi \left( A_{1}\right)
\Phi ^{\left( k-\overset{\cdot }{\beta }\right) }\left( Y_{1,1}\right)
\right) \right) -\varphi \left( B\Phi ^{n_{1}}\left( A_{1}\right) \right)
\varphi \left( Y_{1,1}\right) \right\vert .$
\newline
\newline
Moreover
\newline
\newline
$\left\vert \varphi \left( B\Phi ^{n_{1}-1}\left( \Phi \left( A_{1}\right)
\Phi ^{\left( k-\overset{\cdot }{\beta }\right) }\left( Y_{1,1}\right)
\right) \right) -\varphi \left( B\Phi ^{n_{1}}\left( A_{1}\right) \right)
\varphi \left( Y_{1,1}\right) \right\vert =$
\newline
$=\left\vert \varphi \left( \Phi _{\natural }^{n_{1}-1}\left( B\right) \Phi
\left( A_{1}\right) \Phi ^{\left( k-\overset{\cdot }{\beta }\right) }\left(
Y_{1,1}\right) \right) -\varphi \left( \Phi _{\natural }^{n_{1}-1}\left(
B\right) \Phi \left( A_{1}\right) \right) \varphi \left( Y_{1,1}\right)
\right\vert $,
\newline
\newline
and by the weakly mixing properties we obtain:
\begin{equation*}
\underset{N\rightarrow \infty }{\lim }\dfrac{1}{N+1}\sum\limits_{k=0}^{N}%
\left\vert \varphi \left( B\Phi ^{n_{1}}\left( A_{1}\right) \Phi ^{\left( k-%
\overset{\cdot }{\beta }-1\right) }\left( Y_{1,1}\right) \right) -\varphi
\left( B\Phi ^{n_{1}}\left( A_{1}\right) \right) \varphi \left(
Y_{1,1}\right) \right\vert =0,
\end{equation*}%
and%
\begin{equation*}
\underset{N\rightarrow \infty }{\lim }\dfrac{1}{N+1}\sum\limits_{k=0}^{N}%
\left\vert \varphi \left( \Phi _{\natural }^{n_{1}-1}\left( B\right) \Phi
\left( A_{1}\right) \Phi ^{\left( k-\overset{\cdot }{\beta }\right) }\left(
Y_{1,1}\right) \right) -\varphi \left( \Phi _{\natural }^{n_{1}-1}\left(
B\right) \Phi \left( A_{1}\right) \right) \varphi \left( Y_{1,1}\right)
\right\vert =0.
\end{equation*}
\end {proof}
Finally, the proof of proposition \ref {proposition apx 1} is a simple result of the previous lemma.

\end{document}